\documentclass[11pt,reqno,oneside]{amsart}
\usepackage[utf8]{inputenc}
\usepackage{amsmath,amsthm,amssymb}
\usepackage{colonequals} 
\usepackage{graphics}
\usepackage[pagebackref=true]{hyperref}
\usepackage[usenames, dvipsnames]{xcolor}
\usepackage[normalem]{ulem}
\usepackage{booktabs}
\usepackage{mathtools}
\mathtoolsset{showonlyrefs}
\usepackage{enumitem}
\newlist{todolist}{itemize}{2}
\setlist[todolist]{label=$\square$}
\usepackage{pifont}

\usepackage[square,sort,comma,numbers]{natbib}
\definecolor{darkblue}{rgb}{0.0,0.0,0.3}
\hypersetup{colorlinks,breaklinks,
  linkcolor=darkblue,urlcolor=darkblue,
anchorcolor=darkblue,citecolor=darkblue}
\theoremstyle{plain}
\newtheorem{theorem}{Theorem}
\newtheorem*{theorem*}{Theorem}
\newtheorem{lemma}[theorem]{Lemma}
\newtheorem{proposition}[theorem]{Proposition}
\newtheorem*{proposition*}{Proposition}

\newtheorem*{corollary*}{Corollary}

\theoremstyle{definition}
\newtheorem{remark}[theorem]{Remark}

\newtheorem*{algorithm*}{Algorithm}

\newtheorem*{experiment*}{Experiment}

\DeclareMathOperator{\Cl}{Cl}
\DeclareMathOperator{\CT}{CT}

\renewcommand{\Im}{\operatorname{Im}}
\renewcommand{\Re}{\operatorname{Re}}
\DeclareMathOperator{\im}{Im}

\DeclareMathOperator{\Gal}{Gal}
\DeclareMathOperator{\GL}{GL}
\DeclareMathOperator{\Nm}{Nm}

\DeclareMathOperator*{\Res}{Res}

\DeclareMathOperator{\Tr}{Tr}

\DeclareMathOperator{\new}{new}
\DeclareMathOperator{\cond}{cond}


\newcommand{\C}{\mathbb{C}}
\newcommand{\F}{\mathbb{F}}
\newcommand{\Q}{\mathbb{Q}}
\newcommand{\R}{\mathbb{R}}
\newcommand{\Z}{\mathbb{Z}}

\newcommand{\sbar}{\overline{s}}

\newcommand{\calE}{\mathcal{E}}
\newcommand{\calH}{\mathcal{H}}
\newcommand{\calO}{\mathcal{O}}

\newcommand{\frakb}{\mathfrak{b}}
\newcommand{\frakp}{\mathfrak{p}}

\newcommand{\Zeven}{Z_D^{\textup{even}}}
\newcommand{\Zodd}{Z_D^{\textup{odd}}}

\newcommand{\chid}{\chi_{4D}}
\newcommand{\Szero}[2]{L^2(\Gamma_0(#1)\backslash\mathcal{H}; #2)}
\newcommand{\Sone}[1]{L^2(\Gamma_1(#1)\backslash\mathcal{H})}


\title{The Fibonacci Zeta Function and Modular Forms}

\author[Assaf]{Eran Assaf}
\author[Kuan]{Chan Ieong Kuan}

\author[Lowry-Duda]{David Lowry-Duda}

\author[Walker]{Alexander Walker}

\date{Last compiled: \today}

\begin{document}

\begin{abstract}
  We show that a family of Dirichlet series generalizing the Fibonacci zeta function $\sum F(n)^{-s}$ has meromorphic
  continuation in terms of dihedral $\mathrm{GL}(2)$ Maass forms.
\end{abstract}

\maketitle

\section{Introduction}%
\label{sec:intro}

In~\cite{akldwFibonacciGeneral}, the authors introduced a family of zeta
functions that generalized earlier work~\cite{egami1999curious,
  navas2001fibonacci, kamano2013analytic}
on the Fibonacci zeta function
\begin{equation}
  Z_{\mathrm{Fib}}(s)
  :=
  \sum_{n \geq 1} \frac{1}{F(n)^s}
  =
  1 + 1 + \frac{1}{2^s} + \frac{1}{3^s} + \ldots
\end{equation}

For a square-free integer $D > 1$, let $\calO_D$ denote the
ring of integers in $\mathbb{Q}(\sqrt{D}) \subset \mathbb{R}$.
Let $\varepsilon > 1$ denote the fundamental unit in $\calO_D$.
To simplify presentation and the shape of results, we suppose that
$\varepsilon$ satisfies $N(\varepsilon) = -1$ (which occurs if and only if the
class group and narrow class group of $\mathbb{Q}(\sqrt{D})$ coincide).

We define the $\calO_D$ Lucas and Fibonacci sequences in terms of
traces of the fundamental unit,
\begin{align}
  L_{D}(n) & := \Tr_{\calO_D}(\varepsilon^n),            %
  \label{align:lucas_def}%
  \\
  F_{D}(n) & := \Tr_{\calO_D}(\varepsilon^n / \sqrt{q}), %
  \label{align:fib_def}%
\end{align}
where $q = D$ if $D \equiv 1 \bmod 4$ and otherwise $q = 4D$.
Note that $F_5(n) = F(n)$, the standard Fibonacci function.

The main results of~\cite{akldwFibonacciGeneral} give meromorphic continuation
to the $\mathcal{O}_D$ Fibonacci zeta functions
\begin{align}
  \Zodd(s)
   & :=
  \sum_{n \geq 1}
  \frac{1}{F_D(2n - 1)^s}
  =
  \sum_{n \geq 1}
  \frac{1}{\big(\Tr_{\calO_D}(\varepsilon^{2n - 1} / \sqrt{q}) \big)^s},
  \\
  \Zeven(s)
   & :=
  \sum_{n \geq 1}
  \frac{1}{F_D(2n)^s}
  =
  \sum_{n \geq 1}
  \frac{1}{\big(\Tr_{\calO_D}(\varepsilon^{2n} / \sqrt{q}) \big)^s}
\end{align}
using binomial expansion or Poisson summation.
A third proof was sketched that related the $\mathcal{O}_D$ Fibonacci zeta
function to modular forms.

In this article, we complete this sketch and describe how to understand the
$\mathcal{O}_D$ Fibonacci zeta functions via modular forms.
Ultimately, we show that the resulting meromorphic continuation agrees with the
continuation obtained via Poisson summation in Theorems~11 and~13 of~\cite{akldwFibonacciGeneral}.

\begin{theorem}\label{thm:main-theorem}
  The $\mathcal{O}_D$ Fibonacci zeta functions admit meromorphic continuation to $s \in \mathbb{C}$. We have
  \begin{align*}
    \Zodd(s) & =
    \frac{q^{s/2}}{8 \Gamma(s) \log \varepsilon}
    \sum_{m \in \mathbb{Z}} (-1)^m
    \Gamma\Big(\frac{s}{2} + \frac{\pi i m}{2\log \varepsilon}\Big)
    \Gamma\Big(\frac{s}{2} - \frac{\pi i m}{2\log \varepsilon}\Big), \\
    \Zeven(s)
             & =
    \frac{q^{\frac{s}{2}} \Gamma(1-s)}{4 \log \varepsilon} \sum_{m \in \mathbb{Z}}
    \frac{\Gamma(\frac{s}{2} - \frac{\pi i m}{2 \log \varepsilon})}
    {\Gamma(1-\frac{s}{2} - \frac{\pi i m}{2 \log \varepsilon})},
  \end{align*}
  for all $s \in \mathbb{C}$ and for $\Re s < 0$, respectively.
\end{theorem}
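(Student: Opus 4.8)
\emph{The plan.} I would turn each identity into an explicit Fourier– or Mellin–transform evaluation, exploiting that the hypothesis $N(\varepsilon)=-1$ collapses $F_D(n)$ to a pure hyperbolic cosine or sine. Write $\ell=\log\varepsilon$. Since the nontrivial automorphism $\sigma$ of $\Q(\sqrt D)$ sends $\varepsilon\mapsto N(\varepsilon)/\varepsilon=-\varepsilon^{-1}$ and $\sqrt q\mapsto-\sqrt q$,
\begin{equation*}
F_D(n)=\frac{\varepsilon^{n}-(-1)^{n}\varepsilon^{-n}}{\sqrt q}=
\begin{cases} q^{-1/2}\cdot 2\cosh(n\ell), & n\ \text{odd},\\ q^{-1/2}\cdot 2\sinh(n\ell), & n\ \text{even}. \end{cases}
\end{equation*}
On odd $n$ the quantity $F_D(n)$ is even in $n$ and positive, so pairing $n\leftrightarrow-n$ gives $2\,\Zodd(s)=q^{s/2}\sum_{n\ \mathrm{odd}}(2\cosh n\ell)^{-s}$, a sum over all odd $n\in\Z$; on even $n$ the reflection introduces a sign, which (together with the vanishing of $F_D(0)$) is why the odd and even cases are handled differently.

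\emph{The odd case.} For $\Re s>0$ the function $g(x)=q^{s/2}(2\cosh x\ell)^{-s}$ is smooth and, with all its derivatives, decays like $\varepsilon^{-\Re s\,|x|}$, so Poisson summation applies to $\sum_{n\ \mathrm{odd}}g(n)=\sum_{k\in\Z}g(2k+1)$ and yields $2\,\Zodd(s)=\tfrac12\sum_{m\in\Z}(-1)^{m}\widehat g(m/2)$. Rescaling $x\mapsto x/\ell$ reduces $\widehat g$ to the classical beta–integral evaluation
\begin{equation*}
\int_{\R}\frac{e^{-iay}}{(2\cosh y)^{s}}\,dy=\frac{1}{2\Gamma(s)}\,\Gamma\!\Big(\frac{s+ia}{2}\Big)\Gamma\!\Big(\frac{s-ia}{2}\Big)
\end{equation*}
(obtained by the substitution $t=(1+e^{-2y})^{-1}$), so $\widehat g(\xi)=\frac{q^{s/2}}{2\ell\,\Gamma(s)}\Gamma(\tfrac s2+\tfrac{\pi i\xi}{\ell})\Gamma(\tfrac s2-\tfrac{\pi i\xi}{\ell})$. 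Setting $\xi=m/2$ and dividing by $2$ reproduces the stated formula for $\Zodd$. By Stirling, $\Gamma(\tfrac s2+i\tau)\Gamma(\tfrac s2-i\tau)$ decays like $|\tau|^{\Re s-1}e^{-\pi|\tau|}$, so the $m$–sum converges locally uniformly on $\C$; the identity, valid where $\Zodd$ converges, then extends to a meromorphic identity on all of $\C$.

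\emph{The even case.} Here $\Zeven(s)=q^{s/2}\sum_{k\ge1}(2\sinh 2k\ell)^{-s}$ converges only for $\Re s>0$, whereas the target sum converges only for $\Re s<0$, so a bridge is needed. I would start from $(2\sinh u)^{-s}=e^{-su}(1-e^{-2u})^{-s}$ together with the Mellin–Barnes representation of $(1-e^{-2u})^{-s}$,
\begin{equation*}
(2\sinh u)^{-s}=\frac{\Gamma(1-s)}{2\pi i}\int_{(c)}\frac{\Gamma(z)}{\Gamma(1+z-s)}\,e^{-u(s-2z)}\,dz\qquad(u>0,\ c>0,\ \Re s<1),
\end{equation*}
and sum over $u=2k\ell$ along a line $0<c<\tfrac{\Re s}{2}$, collapsing the geometric series to $(e^{2\ell(s-2z)}-1)^{-1}$:
\begin{equation*}
\Zeven(s)=\frac{q^{s/2}\Gamma(1-s)}{2\pi i}\int_{(c)}\frac{\Gamma(z)}{\Gamma(1+z-s)}\,\frac{dz}{e^{2\ell(s-2z)}-1},\qquad 0<c<\tfrac{\Re s}{2},\ \ 0<\Re s<1.
\end{equation*}
The only poles to the right of the contour are the simple poles of $(e^{2\ell(s-2z)}-1)^{-1}$ at $\zeta_m=\tfrac s2-\tfrac{\pi i m}{2\ell}$, with residue $-\tfrac{1}{4\ell}$ and with $\tfrac{\Gamma(\zeta_m)}{\Gamma(1+\zeta_m-s)}=\tfrac{\Gamma(s/2-\pi i m/2\ell)}{\Gamma(1-s/2-\pi i m/2\ell)}$. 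Shifting the contour rightward past all of them — truncating at height $T$ and bounding the horizontal pieces, which are $O(T^{\Re s-1})\to 0$ — collects $\tfrac{q^{s/2}\Gamma(1-s)}{4\ell}\sum_m\tfrac{\Gamma(s/2-\pi i m/2\ell)}{\Gamma(1-s/2-\pi i m/2\ell)}$ plus a remaining line integral. Continuing the resulting identity in $s$ and then letting the contour run off to $C\to+\infty$ inside $\Re s<0$ — where the remaining integral is $O(C^{\Re s})\to 0$ and the $m$–sum is absolutely convergent — leaves exactly the stated formula for $\Zeven$ on $\Re s<0$.

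\emph{Where the difficulty lies.} The odd case is essentially mechanical. The even case is where the work is: the Mellin–Barnes line integral is only conditionally convergent once $\Re s\ge0$ (the kernel $(e^{2\ell(s-2z)}-1)^{-1}$ has nonzero mean along vertical lines), the contour must be pushed across the infinite family $\{\zeta_m\}$ lying on a single vertical line, and the final formula lives outside the half-plane where $\Zeven$ is a convergent Dirichlet series, so the passage to $\Re s<0$ must be made by analytic continuation rather than by staying inside the domain of convergence. Making this rigorous — equivalently, checking that the output agrees, after a $\Gamma$–reflection rearrangement, with the Poisson–summation continuation in Theorems~11 and~13 of \cite{akldwFibonacciGeneral} — is the main obstacle. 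I would then close by reading the result spectrally: the index $m\in\Z$ and the shifts $\pi m/\log\varepsilon$ are precisely the spectral parameters of the dihedral $\mathrm{GL}(2)$ Maass forms (and their Eisenstein analogues) attached to $\Q(\sqrt D)$ — a one–parameter family because $\calO_D^{\times}$ has rank one — and the $\Gamma$–products are their archimedean $L$–factors, so the two identities are the geometric–versus–spectral expansions of the relevant period integral.
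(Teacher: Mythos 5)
Your argument is correct in outline, but it is a genuinely different proof from the one in the paper --- in spirit it is the Poisson-summation/binomial-expansion route of \cite{akldwFibonacciGeneral}, which this article explicitly sets aside in favor of a modular-forms proof. You collapse $F_D(n)$ to $2\cosh(n\log\varepsilon)/\sqrt q$ or $2\sinh(n\log\varepsilon)/\sqrt q$ and evaluate the resulting sums by Poisson summation (odd case) and a Mellin--Barnes contour shift (even case). The paper instead detects $\mathcal O_D$ Fibonacci numbers via the Pell equations of Proposition~\ref{prop:pfib}, writes $4\Gamma(s)\Zodd(2s)$ as a Petersson inner product of $V_1(z)=y^{1/2}\theta(Dz)\overline{\theta(z)}$ against a Poincar\'e series, regularizes $V_1$ by subtracting an Eisenstein series, and spectrally expands: the continuous spectrum vanishes, only dihedral Maass forms survive in the discrete spectrum (after an oldform/newform orthogonalization when $D\equiv 1\bmod 4$), and the $\Gamma$-products indexed by $m$ arise as the archimedean factors $\Gamma(s\pm it_m)$ of those dihedral forms. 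Your route is shorter and more elementary; the paper's buys the structural fact you only gesture at in your closing remark, namely that the continuation literally \emph{is} a spectral expansion supported on dihedral forms. Two caveats on your version: the odd case is essentially complete, but the even case still has real work left --- the poles $\zeta_m$ all sit on the single vertical line $\Re z=\tfrac{s}{2}$, so the contour cannot be pushed ``past all of them'' by a naive truncation at height $T$, and the passage from the strip $0<\Re s<1$ where your Mellin--Barnes setup is valid to the half-plane $\Re s<0$ where the target sum converges must be justified by analytic continuation. You flag this honestly; it is exactly the difficulty the paper outsources to the controlled limiting process of \cite{hoffsteinhulse13} in \S\ref{rem:Z-even}.
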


The expressions for $\Zodd$ and $\Zeven$ in Theorem~\ref{thm:main-theorem} arise as spectral expansions of a certain Petersson inner product.
Interestingly, these spectral expansions turn out to be supported only on dihedral Maass cusp forms.
From a representation-theoretic perspective, this implies that the associated
automorphic representation is dihedral.
It would be nice to have an argument that the representation $\Pi$ of $\GL_2$
assocaited to $V_1(z)$ is dihedral from first-principles.
But this has eluded the authors.

Though $\Zodd$ and $\Zeven$ are similar in many ways, $\Zodd$ is much simpler
to study than $\Zeven$. To highlight the main ideas of our method, we focus
mostly on $\Zodd$ in this article. A brief treatment of $\Zeven$ is presented
in \S\ref{rem:Z-even}.

\section*{Acknowledgements}

EA was supported by Simons Collaboration Grant (550029, to Voight) and
by Simons Foundation Grant (SFI-MPS-Infrastructure-00008651, to Sutherland).
CIK was supported in part by NSFC (No.\ 11901585).
DLD was supported by the Simons Collaboration in Arithmetic Geometry, Number
Theory, and Computation via the Simons Foundation grant 546235.
AW was supported by the Additional Funding Programme for Mathematical Sciences,
delivered by EPSRC (EP/V521917/1) and the Heilbronn Institute for Mathematical Research.

We would also like to thank John Voight, Andy Booker, Jeff Hoffstein, and Min Lee
for helpful discussion and feedback on the ideas leading into this paper.

\section{Shifted Convolution Dirichlet Series}

The connection between the Fibonacci zeta function and modular forms comes from
the following proposition, which recognizes $\mathcal{O}_D$ Fibonacci numbers
as the solutions to certain generalized Pell equations.

\begin{proposition}[Proposition~6 of~\cite{akldwFibonacciGeneral}]\label{prop:pfib}
  The positive integer $n$ is an $\calO_D$ Fibonacci number if and only if
  there is an integer solution in $X$ to the equation
  \begin{equation}\label{eq:proppfib}
    X^2 = qn^2 \pm 4,
  \end{equation}
  where $q = D$ if $D \equiv 1 \bmod 4$ and otherwise $q = 4D$.
  If in addition $N(\varepsilon) = -1$, then $n$ is an odd-indexed $\calO_D$
  Fibonacci number if and only if there is an integer solution in $X$ to the equation
  \begin{equation}
    X^2 = qn^2 - 4.
  \end{equation}
\end{proposition}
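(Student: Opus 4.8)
The plan is to run the classical dictionary between the generalized Pell equation $X^2 = qn^2 \pm 4$ and the unit group of $\calO_D$, with the $\calO_D$ Lucas value supplying the companion coordinate. First I would record the algebraic identity linking the two sequences. Let $\varepsilon'$ denote the nontrivial Galois conjugate of $\varepsilon$; since $q \in \{D, 4D\}$ we have $\sqrt q \in \Q(\sqrt D)$ with $\overline{\sqrt q} = -\sqrt q$, so that $L_D(m) = \varepsilon^m + (\varepsilon')^m$ and $\sqrt q\, F_D(m) = \varepsilon^m - (\varepsilon')^m$. Subtracting squares,
\[
  L_D(m)^2 - q\, F_D(m)^2 = 4(\varepsilon\varepsilon')^m = 4\, N(\varepsilon)^m .
\]
This already gives one direction of each assertion: if $n = F_D(m)$ for some $m \geq 1$, then $X = L_D(m) \in \Z$ satisfies $X^2 = qn^2 + 4N(\varepsilon)^m$, i.e.\ $X^2 = qn^2 \pm 4$, and when $N(\varepsilon) = -1$ the sign is $-$ exactly when $m$ is odd.

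For the converse, suppose $X \in \Z$ satisfies $X^2 = qn^2 + 4\delta$ with $\delta \in \{\pm 1\}$ and $n \geq 1$; replacing $X$ by $|X|$ we may assume $X \geq 0$. Put $\beta := \tfrac12(X + n\sqrt q) \in \Q(\sqrt D)$, so that $\beta \geq \tfrac12\sqrt q > 1$ because the discriminant $q$ of $\Q(\sqrt D)$ satisfies $q \geq 5$. I would check $\beta \in \calO_D$ via the usual split on $D \bmod 4$: if $D \equiv 1 \bmod 4$ then $q = D$, and reducing $X^2 - Dn^2 = 4\delta$ modulo $4$ forces $X \equiv n \bmod 2$; if $D \equiv 2,3 \bmod 4$ then $q = 4D$, and the equation forces $X$ even; in either case $\beta$ is an algebraic integer. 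Now $N(\beta) = \tfrac14(X^2 - qn^2) = \delta \in \{\pm 1\}$, so $\beta \in \calO_D^\times$, and $\beta > 1$ rules out $\beta = \pm 1$. Since $\varepsilon > 1$, the positive elements of $\calO_D^\times = \{\pm 1\} \times \langle\varepsilon\rangle$ are precisely the powers $\varepsilon^k$, $k \in \Z$, with $\varepsilon^k > 1 \iff k \geq 1$; hence $\beta = \varepsilon^k$ for some $k \geq 1$. Taking Galois conjugates, $\beta - \overline\beta = n\sqrt q$, so
\[
  n = \frac{\varepsilon^k - (\varepsilon')^k}{\sqrt q} = \Tr_{\calO_D}\!\big(\varepsilon^k/\sqrt q\big) = F_D(k),
\]
exhibiting $n$ as an $\calO_D$ Fibonacci number. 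Finally, if $\delta = -1$ and $N(\varepsilon) = -1$, then $-1 = N(\beta) = N(\varepsilon)^k = (-1)^k$ forces $k$ odd, so $n$ is odd-indexed; conversely, an odd-indexed $n = F_D(k)$ yields the integer solution $X = L_D(k)$ to $X^2 = qn^2 - 4$ by the displayed identity. Assembling the implications gives the proposition.

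I expect the only step requiring genuine care to be the verification $\beta \in \calO_D$: this is exactly where the two normalizations $q = D$ and $q = 4D$ behave differently, and where the parity and divisibility conditions extracted from the Pell equation must be tracked precisely. Everything else — the squared identity, the structure of $\calO_D^\times$ from Dirichlet's unit theorem, and the crude bound $\beta > 1$ ruling out $k \leq 0$ — is routine.
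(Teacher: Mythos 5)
Your proof is correct and complete: the identity $L_D(m)^2 - qF_D(m)^2 = 4N(\varepsilon)^m$ handles the forward direction, and the converse correctly packages a solution $(X,n)$ into a unit $\beta = \tfrac12(X+n\sqrt{q})$ of $\calO_D$ exceeding $1$, with the integrality check split properly according to $D \bmod 4$ and the sign of $N(\beta)=N(\varepsilon)^k$ tracking the parity of the index. The present paper does not reprove this statement---it imports it as Proposition~6 of~\cite{akldwFibonacciGeneral}---but your argument is the standard Pell-equation/unit-group dictionary that underlies that result, so there is nothing further to compare.
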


This proposition shows that an $\calO_D$ Fibonacci number can
be recognized by detecting squares.
Let $r_1(n) = \#\{x \in \Z :  x^2 = n\}$ (which is twice the square indicator
function, except $r_1(0) = 1$).
It follows that
\begin{align}
  \Zodd(s)  & = \sum_{n \ge 1} \frac{1}{F_D(2n-1)^s}
  = \frac{1}{4} \sum_{n \ge 1} \frac{r_1(n)r_1(qn-4)}{n^{s/2}},
  \\
  \Zeven(s) & = \sum_{n \ge 1} \frac{1}{F_D(2n)^s}
  = \frac{1}{4} \sum_{n \ge 1} \frac{r_1(n)r_1(qn+4)}{n^{s/2}}.
\end{align}
We simplify further depending on $D \bmod 4$.
When $D \equiv 1 \bmod 4$, we have $q = D$.
Otherwise, we have $q = 4D$ and $r_1(qn - 4) = r_1(4D - 4) = r_1(D - 1)$.
Stated differently, $4D - 4$ is a square if and only if $D - 1$ is a square.
Altogether, this allows us to rewrite the $\mathcal{O}_D$ Fibonacci series as
\begin{align}\label{eq:basic_modular}
  \Zodd(s) = \frac{1}{4} \sum_{n \ge 1} \frac{r_1(n)r_1(Dn-\ell)}{n^{s/2}},
  \; \Zeven(s) = \frac{1}{4} \sum_{n \ge 1} \frac{r_1(n)r_1(Dn+\ell)}{n^{s/2}},
\end{align}
where $\ell = \ell(D) = 4$ if $D \equiv 1 \bmod 4$ and otherwise $\ell = 1$.

The sequence $\{r_1(n)\}$ gives the coefficients of a modular form and the sums
on the right of~\eqref{eq:basic_modular} are shifted convolution Dirichlet
series.
We will examine $\Zodd$ in detail and discuss the necessary modifications to treat
$\Zeven$ in Remark~\ref{rem:Z-even}.

The series for $\Zodd$ in~\eqref{eq:basic_modular} is closely related to a
series studied in~\cite[\S3]{hkldw_3aps}.
As in that work, we obtain the meromorphic continuation for $\Zodd(s)$ via spectral
expansion.

Let $\theta(z) = \sum_{n \ge 0} r_1(n) e^{2 \pi i n z} \in S_{1/2}(\Gamma_0(4))$ denote
the classical theta function, and consider
$V_1(z) = \Im(z)^{1/2} \theta(Dz) \overline{\theta(z)}$.
Then $V_1(z)$ is an automorphic form of weight $0$ for $\Gamma_0(4D)$ with nebentypus
$\chid = \left( \frac{4D}{\cdot} \right)$.
For $h \ge 1$, let $P_h(z, s; \chid)$ denote the Poincar{\'e} series
\begin{equation} \label{eq:P-h-definition}
  P_h(z, s; \chid) \colonequals
  \frac{1}{2} \sum_{\gamma \in \Gamma_{\infty} \backslash \Gamma_0(4D)}
  \chid(\gamma) \Im(\gamma z)^s e^{2 \pi i h \gamma z},
\end{equation}
where $\Gamma_\infty \subset \Gamma_0(4D)$ denotes the stabilizer of the cusp at $\infty$.
Then $P_h$ is also an automorphic form of weight $0$ for $\Gamma_0(4D)$ with
nebentypus $\chid$.
The Petersson inner product of $V_1$ with $P_\ell$ (where $\ell =
  \ell(D)$ is $1$ or $4$, as described above) gives a completed version
of $\Zodd$:
\begin{equation}\label{eq:spectral_base}
  4 \Gamma(s) \Zodd(2s)
  = (4D \pi)^{s}
  \bigl\langle V_1, P_\ell(\cdot, \sbar + \tfrac{1}{2}; \chid) \bigr\rangle.
\end{equation}
The Poincar\'e series $P_\ell(\cdot, s)$ is known to have meromorphic
continuation to $\mathbb{C}$ (see for instance~\cite[\S15]{iwanieckowalski04}) and
decays rapidly at cusps.

\section{Spectral Expansion}

Our basic strategy is to obtain a spectral expansion of $\Zodd$
from the spectral expansion of $P_\ell$.
There is a minor obstruction: $V_1$ is non-cuspidal, which leads to poor convergence in the spectral expansion.
Thus we first modify $V_1$ by subtracting an Eisenstein series that cancels the
growth at the cusps in \S\ref{ssec:subt}.
We denote this by $V(z) = V_1(z) - E(z, \tfrac{1}{2})$.

Subsequently, in \S\ref{ssec:init_spectral} we spectrally expand $P_\ell$ and
insert this into~\eqref{eq:spectral_base}.
This expresses $\Zodd$ as a sum of terms indexed by the discrete and continuous
spectra of the hyperbolic Laplacian.
A short computation in~\S\ref{ssec:cont} shows that the contribution of the continuous spectrum vanishes.

Describing the discrete spectrum requires more work.
We show that the only forms that contribute are dihedral forms.
When $D \not \equiv 1 \bmod 4$, this is largely straightforward and follows
from a short argument given at the start of \S\ref{ssec:disc}.
But when $D \equiv 1 \bmod 4$, there is tension because there are oldforms that
appear in the spectral decomposition.
The problem is that the behavior seems easiest to understand by first studying
newforms on lower levels and then raising and orthogonalizing these forms in
full level.
We study oldform and newform Maass forms in \S\ref{ssec:disc}, and then
orthogonalize to get an orthogonal basis in \S\ref{ssec:orth}.

\subsection{Subtracting growth at cusps}\label{ssec:subt}

The process of mollifying growth at cusps by subtracting a linear combination
of Eisenstein series is one form of automorphic regularization.
Automorphic regularization is often described using a set of Eisenstein series
indexed by the cusps of $\Gamma_0(4D)$.
We avoid this approach here because the constant coefficients of the set
$\{\mathcal{E}_\mathfrak{a}(z,s)\}$ are linearly dependent functions of $z$
when $s = 1/2$.

Instead, we show that in this case it is still possible to mollify the growth of a certain real subspace of automorphic forms.

\begin{proposition}\label{prop:Fricke-regularizable}
  Let $D$ be square-free. Let $f$ be an automorphic form of weight $0$ for $\Gamma_0(4D)$ of nebentypus $\chid$, such that $W_{4D} f = \overline{f}$, where $W_{4D}$ is the Fricke involution.
  There exists an Eisenstein series $E(z,s)$, modular for $\Gamma_0(4D)$ of nebentypus $\chid$, such that
  \begin{equation}
    f(z) - E(z, \tfrac{1}{2})
    \in L^2(\Gamma_0(4D) \backslash \calH; \chid).
  \end{equation}
\end{proposition}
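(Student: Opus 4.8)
The plan is to build $E(z,\tfrac12)$ as a linear combination of the weight-$0$, nebentypus-$\chid$ Eisenstein series attached to the singular cusps of $\Gamma_0(4D)$, matching the constant-term growth of $f$ cusp by cusp. First I would record what that growth can be: since $f$ has weight $0$ and moderate growth, its constant term at a cusp $\mathfrak{a}$ equals $c_{\mathfrak{a}}\,\Im(\sigma_{\mathfrak{a}}^{-1}z)^{1/2}$ plus a term that is square-integrable near $\mathfrak{a}$, and $f$ is already square-integrable near every non-singular cusp. (For the form we apply this to, $f = \Im(z)^{1/2}\theta(Dz)\overline{\theta(z)}$, this is immediate from the Fourier expansions of $\theta$ at the cusps, which produce only the exponent $\tfrac12$ and no $\log$-term.) Collecting the numbers $c_{\mathfrak{a}}$ into a vector $\mathbf{c}$ indexed by the singular cusps, it suffices to produce an Eisenstein series whose value at $s=\tfrac12$ has the same vector $\mathbf{c}$. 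For each singular cusp $\mathfrak{a}$ let $E_{\mathfrak{a}}(z,s)$ be the associated Eisenstein series; because $\chid$ is nontrivial, $E_{\mathfrak{a}}(z,s)$ is holomorphic at $s=\tfrac12$, with constant term $\delta_{\mathfrak{ab}}\,\Im(\sigma_{\mathfrak{b}}^{-1}z)^{s}+\varphi_{\mathfrak{ab}}(s)\,\Im(\sigma_{\mathfrak{b}}^{-1}z)^{1-s}$ at $\mathfrak{b}$. Writing $\Phi(s)=(\varphi_{\mathfrak{ab}}(s))$ for the scattering matrix, the growth vector of $\sum_{\mathfrak{a}}\lambda_{\mathfrak{a}}E_{\mathfrak{a}}(\cdot,\tfrac12)$ is $(I+\Phi(\tfrac12))(\lambda_{\mathfrak{a}})$, so the achievable growth vectors form $\operatorname{im}\!\big(I+\Phi(\tfrac12)\big)$.

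Now the obstruction. The functional equation $\Phi(s)\Phi(1-s)=I$ gives $\Phi(\tfrac12)^{2}=I$, and since $\chid$ is real-valued $\Phi(\tfrac12)$ is a real symmetric involution; hence $\operatorname{im}(I+\Phi(\tfrac12))$ is the $(+1)$-eigenspace of $\Phi(\tfrac12)$, a proper subspace in general (this is the linear dependence among constant coefficients flagged before the proposition). The heart of the argument is that the Fricke hypothesis forces $\mathbf{c}$ into this eigenspace. The Fricke involution $W_{4D}$ normalizes $\Gamma_0(4D)$, preserves $\chid$, and permutes the singular cusps by an involution $\mathfrak{a}\mapsto\mathfrak{a}^{*}$; with scaling matrices chosen compatibly one gets $W_{4D}E_{\mathfrak{a}}(\cdot,s)=E_{\mathfrak{a}^{*}}(\cdot,s)$, and comparing constant terms of the functional equation before and after applying $W_{4D}$ yields $P\,\Phi(s)\,P=\Phi(s)$, where $P$ is the permutation matrix of $\mathfrak{a}\mapsto\mathfrak{a}^{*}$. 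Applying the same constant-term comparison to $W_{4D}f=\overline{f}$ shows that the growth vector of $W_{4D}f$ is $P\mathbf{c}$ while that of $\overline{f}$ is $\overline{\mathbf{c}}$, so the hypothesis reads $P\mathbf{c}=\overline{\mathbf{c}}$. The real form $\{\mathbf{c}:P\mathbf{c}=\overline{\mathbf{c}}\}$ spans $\C^{n}$ over $\C$, so asking that it lie inside $\operatorname{im}(I+\Phi(\tfrac12))$ amounts to showing $\Phi(\tfrac12)=I$ (or at least $=I$ on the span of the growth vectors of Fricke-symmetric forms). Granting this, pick $\lambda_{\mathfrak{a}}$ with $(I+\Phi(\tfrac12))(\lambda_{\mathfrak{a}})=\mathbf{c}$ and set $E(z,s)=\sum_{\mathfrak{a}}\lambda_{\mathfrak{a}}E_{\mathfrak{a}}(z,s)$, which is modular for $\Gamma_0(4D)$ of nebentypus $\chid$; then $f-E(\cdot,\tfrac12)$ has constant term $o(\Im^{1/2})$ at every cusp and is therefore in $L^{2}(\Gamma_0(4D)\backslash\calH;\chid)$.

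The main obstacle is precisely this last identification — pinning down the scattering matrix $\Phi(\tfrac12)$ for $\Gamma_0(4D)$ with the quadratic nebentypus $\chid$ and verifying it is compatible with the Fricke symmetry in the way required. Unlike the trivial-nebentypus situation, where $\Phi(\tfrac12)$ can be $-I$ (as for $\SL_2(\Z)$, via the pole of $\zeta$) and the analogous statement would fail, here the nebentypus is nontrivial, the relevant Dirichlet $L$-functions are regular at $s=1$, and for $D$ square-free the singular cusps and the action of $W_{4D}$ on them are governed explicitly by the divisors of $4D$; so I expect $\Phi(\tfrac12)$ to admit a closed form that makes the compatibility transparent, though carrying out this computation — including the unit constants from the half-integral normalizations and the bookkeeping of which cusps are singular for $\chid$ — is the delicate step. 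The remaining points (no $\log$-terms and no growth at non-singular cusps in the constant terms of $f$, and holomorphy of $E_{\mathfrak{a}}$ at $s=\tfrac12$) are routine.
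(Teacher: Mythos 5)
You leave the heart of the argument unproven, and the specific identity you defer to --- $\Phi(\tfrac12)=I$, or even its restriction to the span of growth vectors of Fricke-symmetric forms --- cannot hold. The paper's stated reason for avoiding the cusp-indexed basis $\{\mathcal{E}_{\mathfrak a}(z,s)\}$ is precisely that the constant coefficients are linearly dependent at $s=\tfrac12$; in your notation this says $\det(I+\Phi(\tfrac12))=0$, so $\Phi(\tfrac12)$ has a nontrivial $(-1)$-eigenspace and is not the identity. Since, as you note, the Fricke-real vectors $\{\mathbf c: P\mathbf c=\overline{\mathbf c}\}$ span $\C^n$ over $\C$, demanding that they all lie in $\operatorname{im}(I+\Phi(\tfrac12))$ is equivalent to the false statement $\Phi(\tfrac12)=I$, and you give no argument that the growth vectors actually arising from Fricke-symmetric automorphic forms span a smaller subspace avoiding the $(-1)$-eigenspace. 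So the reduction is set up in a way that cannot close: the ``delicate step'' you postpone is not a computation waiting to be done but an obstruction.

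The paper's proof sidesteps this by never trying to solve $(I+\Phi(\tfrac12))\lambda=\mathbf c$ over \emph{all} singular cusps. It fixes a set $X$ of representatives for the singular cusps modulo the $W_{4D}$-action and matches constant terms only on $X$ --- a target $\C^X$ of half the dimension, onto which the constant-term map $\CT:\calE(\cdot,\tfrac12)\to\C^X$ is surjective --- choosing the matching Eisenstein series inside the real subspace $\calE_+=\{E: W_{4D}E=\overline E\}$. The hypothesis $W_{4D}f=\overline f$ then propagates the equality of constant terms from each $x\in X$ to $W_{4D}(x)$ for free, since the constant term of $f$ at $W_{4D}(x)$ is the conjugate of its constant term at $x$, and likewise for $E\in\calE_+$. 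You already have the two ingredients this requires (the compatibility $P\Phi(\tfrac12)P=\Phi(\tfrac12)$ and the constraint $P\mathbf c=\overline{\mathbf c}$), but you assemble them in the wrong direction: the fix is to shrink the target space by the Fricke symmetry, not to enlarge the image of $I+\Phi(\tfrac12)$ to all of $\C^n$.
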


\begin{proof}
  Let $X$ be a set of representatives for the singular cusps of $\chi_{4D}$ under the action of $W_{4D}$. Then the constant term map $\CT : \calE( \cdot, \tfrac{1}{2}) \to \C^{X}$ is surjective, where $\calE$ is the space spanned by the Eisenstein series and $\CT(f)_x$ is the constant term of $f \vert_{\sigma_x}$. We further consider the decomposition to real vector spaces $\calE = \calE_+ \oplus \calE_-$, where $\calE_+$ consists of $E$ such that $W_{4D} E = \overline{E}$ and $\calE_-$ of those with $W_{4D} E = - \overline{E}$. These induce a decomposition $\C^X = V_+ \oplus V_-$.
  It follows that $\CT(f) \in V_+$, so there exists an Eisenstein series $E \in \calE_+$ such that $\CT(E(\cdot, \tfrac{1}{2})) = \CT(f)$.
  Finally, if $x \in X$, then the constant term of $f$ at $W_{4D}(x)$ is
  \begin{align*}
    \CT(W_{4D} f)_{x} & = \CT(\overline{f})_{x} = \overline{\CT(f)_{x}} = \overline{\CT(E(\cdot, \tfrac{1}{2}))_{x}} \\
                      & = \CT(\overline{E(\cdot, \tfrac{1}{2})})_{x} = \CT((W_{4D} E)(\cdot, \tfrac{1}{2}))_{x},
  \end{align*}
  which is the constant term of $E(\cdot, \tfrac{1}{2})$ at $W_{4D}(x)$, showing the result.
\end{proof}

\begin{remark}\label{rem:explicit-regularization}
  Since $W_{4D}(V_1) = \overline{V_1}$, Proposition~\ref{prop:Fricke-regularizable} implies that there exists an Eisenstein series $E(z,s)$, modular for $\Gamma_0(4D)$ of nebentypus $\chid$, for which
  \begin{equation}
    V(z)
    \colonequals
    V_1(z) - E(z, \tfrac{1}{2})
    \in L^2(\Gamma_0(4D) \backslash \calH; \chid).
  \end{equation}
  To explicitly construct such an Eisenstein series, we use a linear combination of Eisenstein series indexed by two characters, $E_{\chi_1, \chi_2}$, as defined in~\cite{huxley0},
  \begin{equation}
    E_{\chi_1, \chi_2}(z,s)
    =
    \frac{1}{2} \sum_{\gamma \in \Gamma_{\infty} \backslash \Gamma_0(4D)}
    \chi_1(\gamma_{2,1}) \chi_2(\gamma_{2,2}) \Im (\gamma z)^s,
    \quad \gamma = \begin{pmatrix} \gamma_{1,1} & \gamma_{1,2} \\ \gamma_{2,1} &
                \gamma_{2,2}\end{pmatrix}.
  \end{equation}
  For an odd divisor $r \mid D$, define $\chi_{r^\diamond}$ to be the unique
  primitive quadratic character of conductor $r$.
  Stated differently, write $r^\diamond= (\frac{-1}{r}) r$, where
  $\left(\frac{-1}{\cdot}\right)$ is the Kronecker symbol.
  Then $\chi_{r^\diamond}$ is the Dirichlet character that agrees with the
  Kronecker symbol $\left(\frac{r^\diamond}{\cdot}\right)$.
  We also use Shimura's symbol $\varepsilon_r$, which equals $1$ or $i$ according as $r \equiv 1$ or $3 \pmod 4$.
  If $D \equiv 2, 3 \bmod 4$, we define
  \begin{equation}
    E(z,s)
    =
    \sum_{\substack{0< r \mid D \\ 2 \nmid r}}
    \frac{\varepsilon_r}{\sqrt{r}} E_{\chi_{r^\diamond}, \chid
        \chi_{r^{\diamond}}}(z,s).
  \end{equation}
  Here, by abuse of notation, we suppose that $\chid \chi_{r^\diamond}$ denotes the primitive character inducing $\chid \chi_{r^\diamond}$.
  If $D \equiv 1 \bmod 4$, we define
  \begin{equation}
    \beta_{r,1} = \beta_{r,4}
    = \begin{cases}
      1,           & r \equiv \frac{D}{r} \bmod 8      \\
      \frac{1}{3}, & r \not \equiv \frac{D}{r} \bmod 8
    \end{cases},
    \qquad
    \beta_{r,2}
    = \begin{cases}
      -\sqrt{2} \left(\frac{2}{r} \right), & r \equiv \frac{D}{r} \bmod 8      \\
      0,                                   & r \not \equiv \frac{D}{r} \bmod 8
    \end{cases},
  \end{equation}
  and then define
  \begin{equation}\label{eq:eisenstein_lincomb}
    E(z,s)
    =
    \sum_{\substack{0 < r \mid D \\ r^2 \le D}}
    \frac{\varepsilon_r}{\sqrt{r}}
    \sum_{B \mid 4} \beta_{r,B} E_{\chi_{r^\diamond}, \chid \chi_{r^\diamond}}(Bz,s).
  \end{equation}
  In both cases, one can check using explicit formulas (e.g.\ those in~\cite{Young}) that $V_1(z) - E(z, \frac{1}{2})$ vanishes at every cusp of
  $\Gamma_0(4D)$ that is singular for the character $\chid$.
\end{remark}

\subsection{Initial Spectral Decomposition}\label{ssec:init_spectral}

We are now ready to spectrally expand $P_\ell$ for use in~\eqref{eq:spectral_base}.
Let $\{ \mu_j \}$ be an orthonormal basis of Hecke--Maass forms  
(cf.~\cite{Maass}) for $L^2(\Gamma_0(4D) \backslash \calH; \chid)$, with
respective spectral types $\frac{1}{2} + it_j$.
Each $\mu_j$ admits a Fourier expansion of the form
\begin{equation} \label{eq: Fourier expansion of cusp forms}
  \mu_j(x+iy)
  =
  \sqrt{y} \sum_{n \ne 0} \rho_j(n) K_{it_j} (2 \pi |n| y) e^{2 \pi i n x},
\end{equation}
with associated Hecke eigenvalues $\lambda_j(n)$.
By orthonormality, we can write $\rho_j(n) = \rho_j(1) \lambda_j(n)$ for all $n$.
We assume $\rho_j(1) \in \R$ without loss of generality.

We also let $\{ \mathcal{E}_\mathfrak{a} \}$ be a basis of Eisenstein series
associated to the cusps $\{ \mathfrak{a} \}$ of $\Gamma_0(4D)$ that are singular with respect to
$\chid$.
The Eisenstein series $\mathcal{E}_\mathfrak{a}$ is given by
\[
  \mathcal{E}_\mathfrak{a}(z,w; \chid)
  :=
  \sum_{\gamma \in \Gamma_\mathfrak{a} \backslash \Gamma_0(4D)}
  \chi_{D}(\gamma) \Im (\sigma_{\mathfrak{a}}^{-1} \gamma z)^w,
\]
where $\sigma_{\mathfrak{a}}$ is a scaling matrix associated to the cusp $\mathfrak{a}$.

Then by~\cite[Prop.\ 4.1, 4.2]{dfi}, $P_\ell$ has a spectral expansion of the form
\begin{align}
  \label{eq:P4-spectral expansion}
  P_\ell(z, s; \chid)
   & =
  \sum_j
  \bigl\langle P_\ell(\cdot, s; \chid), \mu_j \bigr\rangle \mu_j(z)
  \\
   & \quad +
  \sum_{\mathfrak{a}} \frac{1}{4 \pi i}
  \int_{(\tfrac{1}{2})}
  \bigl\langle
  P_\ell(\cdot, s; \chid),
  \mathcal{E}_\mathfrak{a}(\cdot, w; \chid)
  \bigr\rangle
  \mathcal{E}_\mathfrak{a}(z, w; \chid) dw,
\end{align}
where $\mathfrak{a}$ indexes the singular cusps.
From~\eqref{eq:spectral_base} and Remark~\ref{rem:explicit-regularization}, we have
\begin{equation} \label{eq:regularized-expansion}
  4 \Gamma(s) \Zodd(2s)
  =
  (4 \pi D)^s \bigl\langle V, P_\ell(\cdot, \overline{s}+\tfrac{1}{2}) \bigr\rangle
  +
  (4 \pi D)^s \bigl\langle E(\cdot, \tfrac{1}{2}), P_\ell(\cdot, \overline{s}+\tfrac{1}{2}) \bigr\rangle.
\end{equation}
For $\Re s$ sufficiently large, inserting the spectral expansion for $P_\ell$
produces the following spectral expansion for $\Zodd(2s)$:
\begin{align}
  \label{eq:initial_spectral}
  \frac{4 \Gamma(s) \Zodd(2s)}{(4\pi D)^s}
   &
  =
  \bigl\langle E(\cdot, \tfrac{1}{2}), P_\ell(\cdot, \overline{s}+\tfrac{1}{2}) \bigr\rangle
  +
  \sum_j \langle P_\ell(\cdot, \overline{s}+\tfrac{1}{2}), \mu_j \rangle
  \langle V, \mu_j \rangle
  \\
   & \quad
  + \sum_{\mathfrak{a}} \frac{1}{4 \pi i}
  \int_{(\tfrac{1}{2})}
  \bigl\langle
  P_\ell(\cdot, \overline{s}+\tfrac{1}{2}),
  \mathcal{E}_\mathfrak{a}(\cdot, w)
  \bigr\rangle
  \bigl\langle
  V,
  \mathcal{E}_\mathfrak{a}(\cdot, w; \chid)
  \bigr\rangle
  dw.\notag{}
\end{align}

\subsection{Vanishing in the Continuous Spectrum}\label{ssec:cont}

Surprisingly, most of the terms of the spectral expansion
in~\eqref{eq:initial_spectral} vanish.
We first show that the second line in~\eqref{eq:initial_spectral}, coming from
Eisenstein series $\mathcal{E}_\mathfrak{a}$ in the continuous spectrum,
vanishes entirely.

\begin{lemma}\label{lemma:continuous_vanish}
  For each singular cusp $\mathfrak{a}$,
  $\langle V(z), \mathcal{E}_\mathfrak{a}(z,\overline{w}; \chid) \rangle = 0$.
\end{lemma}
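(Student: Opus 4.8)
The plan is to compute the inner product $\langle V, \mathcal{E}_\mathfrak{a}(\cdot, \bar w; \chid)\rangle$ by unfolding the Eisenstein series against $V_1$, recognizing the result as a Dirichlet series times Gamma factors, and then observing that the Eisenstein subtraction $E(z,\tfrac12)$ has been engineered precisely to cancel the polar/constant contribution, so that the full regularized pairing vanishes identically. Concretely, I would first recall that $V(z) = V_1(z) - E(z,\tfrac12)$ with $V_1(z) = \Im(z)^{1/2}\,\theta(Dz)\overline{\theta(z)}$, so $V_1$ has moderate growth at the cusps while $V$ is $L^2$. Because $\mathcal{E}_\mathfrak{a}$ is not $L^2$, the pairing $\langle V, \mathcal{E}_\mathfrak{a}(\cdot,\bar w)\rangle$ must be interpreted via analytic continuation in $w$ (or via the truncated/regularized inner product in the sense of Zagier), and I would be careful to state this convention before unfolding.

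The key computational step is the unfolding: conjugating by the scaling matrix $\sigma_\mathfrak{a}$ and collapsing the sum over $\Gamma_\mathfrak{a}\backslash\Gamma_0(4D)$ to the strip, one gets
\begin{equation*}
  \langle V_1, \mathcal{E}_\mathfrak{a}(\cdot, \bar w; \chid)\rangle
  = \int_0^\infty \int_0^1 (V_1|_{\sigma_\mathfrak{a}})(x+iy)\, y^{w}\, \frac{\dd x\,\dd y}{y^2},
\end{equation*}
which picks out the constant term of $V_1|_{\sigma_\mathfrak{a}}$ in the $x$-variable and then Mellin-transforms it in $y$. Since the Fourier coefficients of $\theta(Dz)\overline{\theta(z)}$ at the cusp $\mathfrak{a}$ are (up to Gauss sums and elementary factors) governed by the shifted convolution $\sum r_1(n) r_1(Dn)\cdots$, the constant term is a finite sum of terms of the shape $c_\mathfrak{a}\, y^{\kappa_\mathfrak{a}}$ coming from the product of the two residual/constant pieces of the two theta factors — there is no cuspidal (exponentially decaying) contribution to the constant term, only these power-of-$y$ terms. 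The Mellin transform of a pure power $y^{\kappa}$ against $y^{w-2}\,\dd y$ has no convergence region and continues to $0$ unless $w+\kappa$ hits the pole; so the only potential nonvanishing comes from the constant term of $V_1$, and that is exactly $\mathrm{CT}(V_1|_{\sigma_\mathfrak a})$. By Remark~\ref{rem:explicit-regularization}, the Eisenstein series $E(z,\tfrac12)$ was chosen so that $V_1(z) - E(z,\tfrac12)$ vanishes at every cusp singular for $\chid$, i.e.\ $\mathrm{CT}((V_1-E(\cdot,\tfrac12))|_{\sigma_\mathfrak a}) = 0$ for each such $\mathfrak a$. Hence $\langle V, \mathcal{E}_\mathfrak{a}(\cdot,\bar w;\chid)\rangle$, which depends only on this constant term, vanishes.

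I would organize the write-up as: (i) fix the regularization/analytic-continuation convention for the pairing; (ii) unfold and reduce to the constant term of $V$ at $\mathfrak a$; (iii) cite Remark~\ref{rem:explicit-regularization} to conclude the constant term is zero at every singular cusp. The main obstacle is step (ii): one must justify that, in the analytically continued pairing, \emph{only} the constant term of $V|_{\sigma_\mathfrak a}$ survives — the non-constant Fourier modes contribute integrals $\int_0^\infty K_{\nu}(2\pi|n|y)\,y^{w-3/2}\,\dd y$ type expressions which are entire in $w$ but one needs $V$ to be $L^2$ (which it is, by Proposition~\ref{prop:Fricke-regularizable}) to control the continuation and to know these pieces really do integrate to something finite and, crucially, that the reduction to the constant term is valid near $w=\tfrac12$. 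A clean alternative is to phrase the whole argument via Zagier's regularized inner product: for an $L^2$ automorphic form, its regularized pairing against an Eisenstein series is a finite linear combination of the constant-term coefficients evaluated at $w$, and these all vanish here by the cusp condition. Either route reduces Lemma~\ref{lemma:continuous_vanish} to the cusp-vanishing already recorded in Remark~\ref{rem:explicit-regularization}.
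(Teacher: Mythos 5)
Your overall strategy (unfold against the Eisenstein series, reduce to the constant term of $V$ at $\mathfrak{a}$, invoke the cusp-vanishing from Remark~\ref{rem:explicit-regularization}) matches the paper's, but there is a genuine gap at the central step. You assert that the constant term of $V_1\vert_{\sigma_\mathfrak{a}}$ in the $x$-variable consists only of pure powers of $y$ ``coming from the product of the two residual/constant pieces of the two theta factors,'' with no exponentially decaying contribution. That is not automatic, and it is precisely the arithmetic content of the lemma. At a cusp $\mathfrak{a}$ the expansion of $V_1$ is a constant multiple of $y^{1/2}\theta(uz)\overline{\theta(vz)}$ with $uv=D$, and the zero-frequency Fourier mode of this product picks up every diagonal term with $um=vn$, contributing $\sum_{um=vn} r_1(m) r_1(n) e^{-2\pi(um+vn)y}$ to the constant term. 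Generically (e.g.\ in the classical Rankin--Selberg situation $u=v$) this diagonal sum is nonzero and the inner product does \emph{not} vanish; the whole point of the lemma is that it does vanish here. The paper supplies the missing argument: since $r_1$ is supported on squares, a solution of $um=vn$ with $m,n$ both squares forces $u/v$ to be a rational square, contradicting the squarefreeness of $uv=D$, so the diagonal sum is empty.

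With that observation added, your argument closes: the exponentially decaying part of the constant term is identically zero, and the power-of-$y$ part cancels exactly against the subtracted Eisenstein series $E(z,\tfrac{1}{2})$ at every singular cusp by Proposition~\ref{prop:Fricke-regularizable}, so the constant term of $V\vert_{\sigma_\mathfrak{a}}$ vanishes identically and the unfolded integral is zero. Note also that the paper does not need your regularization/analytic-continuation discussion for the power terms: because the cancellation with $E(z,\tfrac{1}{2})$ is exact, $V$ is honestly $L^2$ with vanishing constant term at each singular cusp, so the pairing is a convergent integral of the zero function rather than a continuation of divergent Mellin transforms of $y^{\kappa}$.
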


\begin{proof}
  We apply the Rankin--Selberg method to  
  unfold the inner product into the integral
  \[
    \langle
    V,
    \mathcal{E}_\mathfrak{a}(z,\overline{w}; \chid)
    \rangle
    =
    \int_0^\infty \int_0^1
    V(\sigma_\mathfrak{a}z) y^w dx \frac{dy}{y^2}.
  \]
  The integral in $x$ isolates the constant term of $V(\sigma_{\mathfrak{a}}z)$.
  We claim that this constant term is actually $0$, which implies the inner
  product is zero.

  A short computation shows that the expansion of
  $V_1(z) = y^{\frac{1}{2}} \theta(Dz) \overline{\theta(z)}$ at the cusp
  $\mathfrak{a}$ is always a constant multiple of $y^{\frac{1}{2}} \theta(uz)
    \overline{\theta(vz)}$ for some integers $u,v$ with $uv = D$.
  By Proposition~\ref{prop:Fricke-regularizable}, the
  non-exponential decay portion of $V_1(z)$ perfectly cancels with the
  Eisenstein series $E(z, s)$ at every singular cusp $\mathfrak{a}$.
  As such, it suffices to examine the exponential decay part of the constant
  term of $y^{\frac{1}{2}}
    \theta(uz) \overline{\theta(vz)}$, which is
  \[
    \sum_{\substack{m,n \geq 1 \\ um = vn}} r_1(m) r_1(n) e^{-2\pi (um + vn)y}.
  \]
  Since $r_1$ restricts to squares, the condition $um=vn$ forces $u/v$ to be a
  rational square, which contradicts that $uv=D$ is square-free.
  Hence the constant term of $V(\sigma_\mathfrak{a} z)$ is identically $0$,
  implying that the inner product is zero as well.
\end{proof}

\subsection{Vanishing of Most of the Discrete Spectrum}\label{ssec:disc}

We turn now to the discrete spectrum.
Recall that an automorphic form $\mu \in L^2(\Gamma_0(4D) \backslash \calH;
  \chid)$ is \emph{dihedral} if there exists a Hecke character $\eta$ of
$\Q(\sqrt{D})$ such that $L(s, \mu) = L(s, \eta)$.

We claim that $\langle V, \mu \rangle = 0$ unless $\mu$ is dihedral.
If $\mu$ were a newform, justification for this claim can be obtained by
mimicking the proof of Lemma~4.4 in~\cite{hkldw_3aps}.

\begin{proof}[(Proof sketch when $\mu$ is a newform)]
  The theta function $\theta(z)$ appears in the residue of a weight $1/2$, level
  $4D$ Eisenstein series
  \[
    E(z,w; \Gamma_0(4D))
    \colonequals
    \sum_{\gamma \in \Gamma_{\infty} \backslash \Gamma_0(4D)} \Im(\gamma z)^w J(\gamma, z)^{-1},
  \]
  where $j(\gamma, z) = \theta(\gamma z) / \theta(z)$ and $J(\gamma, z) =
    j(\gamma, z) / | j(\gamma, z) |$ is a normalized theta multiplier.
  This Eisenstein series has meromorphic continuation to $w \in \mathbb{C}$ with a simple pole at $w=\frac{3}{4}$ with residue of the form $c^{-1} y^{1/4} \theta(z)$.
  By computing the constant term in the Fourier expansion of
  $E(z,w;\Gamma_0(4D))$ (similar to the computations
  from~\cite[\S3.1]{goldfeld2006automorphic}), we conclude that
  \[
    c = \frac{4 \pi D}{3} \prod_{p \mid 4D} (1 + p^{-1}).
  \]
  We compute $\langle V, \mu \rangle$ by identifying $\theta(z)$ as the
  residue of $E(z,w;\Gamma_0(4D))$ and using this Eisenstein series to unfold
  the integral:
  \begin{align}\label{eq:unfolding-theta-pre}
    \langle V, \mu \rangle
     & = \langle y^{\frac{1}{2}} \theta(Dz) \overline{\theta(z)}, \mu \rangle
    = c \Res_{w = \frac{3}{4}}
    \langle y^{1/4} \theta(Dz) \overline{E(z,\overline{w};\Gamma_0(4D))}, \mu \rangle    \\
     & = c \Res_{w = \frac{3}{4}} \int_{0}^{\infty} \int_{0}^{1}
    y^{1/4} \theta(Dz) y^{w} \overline{\mu(z)} \frac{dxdy}{y^2}                          \\
     & = c \Res_{w = \frac{3}{4}} \sum_{n \ge 1} r_1(n) \overline{\rho(Dn)}
    \int_0^{\infty} y^{w-\frac{1}{4}} K_{i t} (2D \pi n y) e^{-2 D \pi n y} \frac{dy}{y} \\
     & = c \cdot  2\sqrt{\pi} \Res_{w = \frac{3}{4}}
    \frac{\Gamma(w-\frac{1}{4}+it) \Gamma(w-\frac{1}{4} - i t)}
    {(4 \pi D)^{w-\frac{1}{4}} \Gamma(w + \frac{1}{4})}
    \sum_{n \ge 1} \frac{\overline{\rho(Dn^2)}}{n^{2w-\frac{1}{2}}},
  \end{align}
  in which we've used~\cite[6.621(3)]{GradshteynRyzhik07} to evaluate the integral.
  As the gamma functions are analytic at $w = \frac{3}{4}$, the residue is zero
  unless the Dirichlet series has a pole at $w=\frac{3}{4}$.

  If $\mu$ is a Hecke newform, this Dirichlet series is essentially the symmetric
  square $L$-function associated to $\mu$; it has a pole if and only if $\mu$
  is dihedral.
\end{proof}

If all relevant Maass forms were newforms, this proof idea would be complete.
But when $D \equiv 1 \bmod 4$, it is necessary to consider oldforms too.

\subsection*{Oldforms and Newforms}

For a positive integer $N$ and a Dirichlet character
$\chi: (\Z / N \Z)^{\times} \to \C$, we consider $\Szero{M}{\chi}$ and $\Sone{M}$ for
divisors $M$ of $N$.
Recall that we have the decomposition (e.g.~\cite[Thm~5.8.3]{diamond2005first}
for holomorphic modular forms)
\[
  \Sone{N} = \bigoplus_{M \mid N} i_M\bigl(\Sone{M}^{\new}\bigr),
\]
where $i_M: \Sone{M} \to \Sone{N}$ is given
by $i_M = \bigoplus_{a \mid (N/M)} B_a$ with
\begin{equation}\label{eq: level raising definition}
  (B_a f)(z) \colonequals \tfrac{1}{\sqrt{a}} f(az).
\end{equation}
Decomposing according to nebentypus, we obtain
\[
  \Szero{N}{\chi}
  =
  \bigoplus_{\cond(\chi) \mid M \mid N} i_M\bigl(\Szero{M}{\chi}^{\new}\bigr).
\]

When $N = 4D$ and $\chi = \chi_{4D}$, the conductor of $\chi_{4D}$ is
$\cond(\chi_{4D}) = q = \tfrac{4D}{\ell}$, hence
\[
  \Szero{4D}{\chi_{4D}}
  =
  \bigoplus_{a \mid \ell} i_{aq}\bigl(\Szero{aq}{\chi_{aq}}^{\new}\bigr).
\]
Here, this means that
\begin{equation}\label{eq:S0new}
  \Szero{4D}{\chid}
  = \begin{cases}
    \Szero{4D}{\chid}^{\new} & D \equiv 2,3 \bmod 4 \\
    \\
    \begin{aligned}
       & \Szero{4D}{\chi_{4D}}^{\new}
      \\
       & \oplus i_2\bigl(\Szero{2D}{\chi_{2D}}^{\new}\bigr)
      \\
       & \oplus i_4\bigl(\Szero{D}{\chi_D}^{\new}\bigr)
    \end{aligned}
                             & D \equiv 1 \bmod 4.
  \end{cases}
\end{equation}

To understand the discrete spectrum, it is therefore necessary to consider terms coming from both newforms and oldforms.

\begin{lemma}\label{lem:old_form_inner}
  Let $a,b$ be positive integers such that $ab \mid \ell$.
  Let $\mu$ denote a Hecke-Maass form in $\Szero{bq}{\chi_{bq}}^{\new}$ which is an
  eigenform of spectral type $\tfrac{1}{2}+it$ for the Hecke algebra with associated Hecke eigenvalues $\lambda(n)$
  and Fourier coefficients $\rho(n)$. Then $\langle V, B_a \mu \rangle = 0$ unless $\mu$ is dihedral, in which case
  \begin{equation}
    \left(1 - \frac{\chi_q(2)}{2} \right) \rho(1) \frac{\langle V, B_a \mu
      \rangle_{4D}}{\langle \mu, \mu \rangle_{4D}}
    = \left(1 - \frac{\chi_{bq}(2)}{2} \right) \cdot \frac{P_2(1,B_a \mu)}{P_2(1, \mu)} \cdot
    \frac{ 2 \lambda(D)}{ \sqrt{\ell} h(D) \log \varepsilon},
  \end{equation}
  where
  \[
    P_2(s, \mu) = \sum_{j=0}^{\infty} \frac{\rho(2^{2j})}{2^{js}}
  \]
  is the $2$-factor of the symmetric square $L$-function associated to $\mu$.
\end{lemma}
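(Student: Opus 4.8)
The plan is to compute $\langle V, B_a\mu\rangle_{4D}$ by the same unfolding trick used in the newform case, but now tracking carefully how the operator $B_a$ interacts with the theta Eisenstein series $E(z,w;\Gamma_0(4D))$ and with the Fourier coefficients $\rho(n)$. First I would recall that $V = V_1 - E(\cdot,\tfrac12)$ and that, by Lemma~\ref{lemma:continuous_vanish} combined with the square-free hypothesis, the Eisenstein piece contributes nothing to inner products against cusp forms, so $\langle V, B_a\mu\rangle_{4D} = \langle y^{1/2}\theta(Dz)\overline{\theta(z)}, B_a\mu\rangle_{4D}$. Then, exactly as in~\eqref{eq:unfolding-theta-pre}, I would write $\theta(z)$ as $c\cdot\Res_{w=3/4}E(z,w;\Gamma_0(4D))$ and unfold; the key point is that the Fourier expansion of $B_a\mu$ has $n$-th coefficient $\tfrac{1}{\sqrt a}\rho(n/a)$ (zero unless $a\mid n$), with the same spectral parameter $t$. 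Carrying this through, the unfolded integral becomes
\begin{equation}\label{eq:unfold-oldform}
  \langle V, B_a\mu\rangle_{4D}
  = \frac{c\cdot 2\sqrt\pi}{\sqrt a}\,\Res_{w=3/4}
  \frac{\Gamma(w-\tfrac14+it)\Gamma(w-\tfrac14-it)}{(4\pi D)^{w-1/4}\Gamma(w+\tfrac14)}
  \sum_{\substack{n\ge1\\ a\mid Dn}} \frac{\overline{\rho(Dn/a)}}{n^{2w-1/2}},
\end{equation}
and since $r_1(n)$ is supported on squares, the outer sum runs over $n = (\text{square})$, so the arithmetic content is a Dirichlet series in $\overline{\rho(Dm^2/a)}$.

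The heart of the argument is then to identify this Dirichlet series and extract its residue at $w=3/4$. Using the Hecke relations $\rho(n) = \rho(1)\lambda(n)$ and multiplicativity of $\lambda$, together with $ab\mid\ell$ and $\ell\in\{1,4\}$ (so $a$ is a power of $2$ and $2\nmid D$), I would factor the sum as $\rho(1)\lambda(D)$ times an Euler product, and recognize the resulting object as (a finite Euler-factor modification of) the symmetric square $L$-function $L(2w-\tfrac12,\mathrm{sym}^2\mu)$. The pole at $w=3/4$, i.e.\ at $2w-\tfrac12=1$, is present precisely when $\mu$ is dihedral — this is the same dichotomy invoked at the end of the newform proof sketch, so for non-dihedral $\mu$ the residue, and hence $\langle V,B_a\mu\rangle_{4D}$, vanishes. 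When $\mu$ is dihedral, $L(s,\mu)=L(s,\eta)$ for a Hecke character $\eta$ of $\Q(\sqrt D)$, and the residue of its symmetric square at $s=1$ is a standard quantity proportional to $\mathrm{Res}_{s=1}\zeta_{\Q(\sqrt D)}(s) = \tfrac{2h(D)\log\varepsilon}{\sqrt{q}}$ (class number formula, with $N(\varepsilon)=-1$ so narrow and wide class numbers agree). The $2$-adic discrepancy between $a\mid Dn$ and the full sum is exactly what produces the factor $\bigl(1-\tfrac{\chi_q(2)}{2}\bigr)$ on the left versus $\bigl(1-\tfrac{\chi_{bq}(2)}{2}\bigr)$ on the right, together with the ratio $P_2(1,B_a\mu)/P_2(1,\mu)$ of local $2$-factors of the symmetric square; I would isolate this by splitting every Euler product into its $2$-part and its prime-to-$2$ part and comparing.

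Assembling the constant $c = \tfrac{4\pi D}{3}\prod_{p\mid 4D}(1+p^{-1})$ with the gamma factors evaluated at $w=3/4$ (where $\Gamma(w-\tfrac14\pm it)\Gamma(w-\tfrac14\mp it)/\Gamma(w+\tfrac14) = \Gamma(\tfrac12+it)\Gamma(\tfrac12-it)/\Gamma(1) = \pi/\cosh(\pi t)$, which will ultimately cancel against the normalization of $\langle\mu,\mu\rangle_{4D}$ via the Rankin--Selberg computation of $\|\mu\|^2$), and dividing by $\langle\mu,\mu\rangle_{4D}$, gives the claimed identity after bookkeeping; the factor $\sqrt\ell$ in the denominator is the $1/\sqrt a$ from $B_a$ absorbed into the normalization, and $h(D)\log\varepsilon$ enters through the residue of the Dedekind zeta function. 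The main obstacle I anticipate is the precise matching of all the local and archimedean normalizing constants — in particular pinning down $\langle\mu,\mu\rangle_{4D}$ (it depends on the level $4D$, not $bq$, so adèlic or Rankin--Selberg normalization must be done consistently) and ensuring the symmetric-square local factors at $2$ are read off correctly for the oldform $B_a\mu$ as opposed to the newform $\mu$; getting these $2$-factors right is what distinguishes this lemma from the newform case and is where I expect the real work to lie.
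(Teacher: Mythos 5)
Your proposal follows essentially the same route as the paper's proof: unfolding against the residue of the half-integral weight Eisenstein series, using the Hecke relation $\rho(Dn)=\rho(D)\lambda(n)$ to identify the resulting Dirichlet series with a $2$-adically modified symmetric square $L$-function whose pole at the edge point detects dihedrality, and evaluating the residue via the Rankin--Selberg computation of $\lVert\mu\rVert^2$ and the class number formula. The only slip is the extraneous $1/\sqrt{a}$ in your unfolded integral: with the normalization $(B_af)(z)=a^{-1/2}f(az)$ and the $\sqrt{y}\,K_{it}$ Fourier expansion, one gets $\rho_{B_a\mu}(n)=\rho_{\mu}(n/a)$ with no additional factor, and the $\sqrt{\ell}$ in the final formula enters instead through $L(1,\chi_q)=h(D)\sqrt{\ell}\log\varepsilon/\sqrt{D}$.
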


Note that when $b = \ell$ and $a = 1$, we essentially have the heuristic proof
and the proof from~\cite{hkldw_3aps} described above.

\begin{proof}
  Let
  \[
    E(z,w; \Gamma_0(4D)) \colonequals \sum_{\gamma \in \Gamma_{\infty}
      \backslash \Gamma_0(4D)} \im(\gamma z)^w J(\gamma, z)^{-1},
  \]
  where $j(\gamma, z) = \theta(\gamma z) / \theta(z)$ and
  $J(\gamma, z) = j(\gamma, z)/ \lvert j(\gamma, z)\rvert$.
  As noted above, we recognize $\theta$ as coming from a residue of a
  half-integral weight Eisenstein series,
  \[
    \Res_{w = \frac{3}{4}} E(z,w; \Gamma_0(4D)) = c^{-1} y^{1/4} \theta(z),
  \]
  with
  \[
    c = \frac{4 \pi D}{3} \prod_{p \mid 4D} (1 + p^{-1}).
  \]
  Unfolding and recalling that the Fourier coefficients of $B_a \mu$ satisfy
  $\rho_{B_a \mu}(n) = \rho_{\mu}(n / a)$ shows that
  \[
    \langle V, B_a \mu \rangle_{4D} = c \cdot 2 \sqrt{\pi} \Res_{w = \frac{3}{4}}
    \frac{\Gamma(w - \tfrac{1}{4} + it) \Gamma(w - \tfrac{1}{4} - it)}
    {(4 \pi D)^{w - \tfrac{1}{4}}\Gamma(w + \tfrac{1}{4})}
    \sum_{n \ge 1}
    \frac{\overline{\rho(Dn^2/a)}}{n^{2w-\tfrac{1}{2}}}.
  \]

  Let $\{T_n \}$ be the Hecke operators on $S_0(D, \chi_{D})$, as defined
  in~\cite[\S6]{dfi}.
  Then for all $D$ and $n$ we have
  \[
    T_{D} T_{n} = \sum_{d \mid (D, n)} \chi_{D}(d) T_{Dn/d^2} = T_{Dn},
  \]
  since $\chi_{D}(d) = 0$ for all $1 \ne d \mid D$.
  It follows that $\lambda(Dn) = \lambda(D) \lambda(n)$, and as a consequence
  $\rho(Dn) = \rho(1) \lambda(D) \lambda(n)$. When $2 \mid D$, we must have
  $b = 1$ and $a = 1$;
  hence in all cases we have $\rho(Dn^2/a) = \rho(D) \lambda(n^2/a)$.
  Therefore we get
  \[
    \sum_{n=1}^{\infty} \frac{\overline{\rho(Dn^2/a)}}{n^s}
    = \overline{\lambda(D)} \sum_{n=1}^{\infty}
    \frac{\overline{\rho(n^2/a)}}{n^s}
    = \overline{\lambda(D)} \sum_{r=0}^{\infty} \sum_{2 \nmid n}
    \frac{\overline{\rho(n^2 \cdot  2^{2r}/a)}}{n^s \cdot 2^{rs}}.
  \]
  Since $a \mid 4$, we can use multiplicativity to write
  \[
    \sum_{n=1}^{\infty} \frac{\overline{\rho(Dn^2/a)}}{n^s}
    =  \overline{\lambda(D)} \sum_{j=0}^{\infty} \frac{\overline{\rho(2^{2j}/a)}}{2^{js}}
    \sum_{2 \nmid n} \frac{\overline{\lambda(n^2)}}{n^s}
    =  \overline{\rho(D)}
    \frac{P_2(s, B_a \overline{\mu})}{P_2(s, \overline{\mu})}
    \sum_{n=1}^{\infty} \frac{\overline{\lambda(n^2)}}{n^s},
  \]
  where $P_2(s, \mu) = \sum_{j=0}^{\infty} \frac{\rho(2^{2j})}{2^{js}}$ as
  in the statement of the lemma.

  Recalling that
  \[
    L(s, \overline{\mu} \otimes \overline{\mu})
    = \zeta^{(bq)}(2s) \sum_{n=1}^{\infty} \frac{\overline{\lambda(n)}^2}{n^s}
    = \zeta^{(bq)}(2s) L(s, \chi_{bq}) \sum_{n=1}^{\infty}  \frac{\overline{\lambda(n^2)}}{n^s},
  \]
  we may write
  \[
    \sum_{n=1}^{\infty} \frac{\overline{\rho(Dn^2/a)}}{n^s}
    = \overline{\rho(D)}
    \frac{P_2(s, B_a \overline{\mu})}{P_2(s, \overline{\mu})}
    \frac{L(s, \overline{\mu} \otimes \overline{\mu})}{\zeta^{bq}(2s) L(s,
      \chi_{bq})}.
  \]
  This has a pole at $s = 1$ if and only if $\mu$ is dihedral.\footnote{The
    authors appreciate insight on this from
    MathOverflow~\cite{moquestion_dihedral}, which led to us reconsidering
    foundational work of Labesse and Langlands~\cite{ll1979}.}
  It also follows that
  \begin{equation}\label{eq:inner_product_with_Ba}
    \langle V, B_a \mu \rangle_{4D}
    = \frac{P_2(1, B_a \overline{\mu})}{P_2(1, \overline{\mu})} \langle V, \mu
    \rangle_{4D}.
  \end{equation}

  We now assume that $\mu$ is dihedral and compute the inner product.
  Then
  \[
    \Res_{s=1} \sum_{n=1}^{\infty} \frac{|\rho(n)|^2}{n^s}
    = \frac{4 \cosh(\pi t)}{\pi}
    \langle |\mu|^2, \Res_{s=1} \mathcal{E}(z,s)
    \rangle_{4D}
    = \frac{3 \cosh(\pi t) \langle \mu, \mu \rangle_{4D}}{\pi^2 D \prod_{p
        \mid 4D} (1 + \tfrac{1}{p})},
  \]
  where
  \[
    \mathcal{E}(z,s)
    = \sum_{\gamma \in \Gamma_{\infty} \backslash \Gamma_0(4D)} \im(\gamma z)^s.
  \]
  By Dirichlet's class number formula, we have
  \begin{align}\label{eq:class-number-formula}
    L(1, \chi_{q}) = \frac{2 h(D) \log \varepsilon}{\sqrt{q}}
    = \frac{h(D) \sqrt{\ell} \log \varepsilon }{\sqrt{D}}.
  \end{align}
  As $b \in \{1, 2, 4\}$, we have
  \[
    \left( 1 - \frac{\chi_{bq}(2)}{2} \right)
    L(1, \chi_{bq}) = \left( 1 - \frac{\chi_{q}(2)}{2} \right)
    \frac{ h(D) \log \varepsilon \sqrt{\ell}}{\sqrt{D}}.
  \]
  Consequently,
  \[
    \left( 1 - \frac{\chi_{q}(2)}{2} \right) \Res_{s=1} \rho(1) \sum_{n=1}^{\infty} \frac{\overline{\rho(Dn^2)}}{n^s}
    = 
    \frac{(1-\frac{\chi_{bq}(2)}{2}) \cdot 3 \overline{\lambda(D)} \cosh(\pi t) \langle \mu, \mu \rangle_{4D}}
    {\pi^2 \sqrt{D\ell} h(D) \log \varepsilon \prod_{p \mid 4D} (1 + \tfrac{1}{p})}.
  \]
  After substituting, we obtain
  \begin{align*}
    \left( 1 - \frac{\chi_{q}(2)}{2} \right) \rho(1) \langle V, \mu \rangle_{4D}
     & = \left( 1 - \frac{\chi_{bq}(2)}{2} \right) \frac{3c \overline{\lambda(D)} \langle \mu, \mu \rangle_{4D}}
    {2 \sqrt{\ell} \pi D h(D) \log \varepsilon \prod_{p \mid 4D} (1 + \tfrac{1}{p})}                              \\
     & = \left( 1 - \frac{\chi_{bq}(2)}{2} \right) \frac{  2 \overline{\lambda(D)} \langle \mu, \mu \rangle_{4D}}
    { \sqrt{\ell} h(D) \log \varepsilon}.
  \end{align*}
  Using~\eqref{eq:inner_product_with_Ba} we get
  \[
    \left( 1 - \frac{\chi_{q}(2)}{2} \right) \rho(1) \frac{\langle V, B_a \mu
      \rangle_{4D}}{\langle \mu, \mu \rangle_{4D}}
    =  \left( 1 - \frac{\chi_{bq}(2)}{2} \right) \cdot \frac{P_2(1, B_a \overline{\mu})}{P_2(1, \overline{\mu})} \cdot \frac{ 2 \overline{\lambda(D)}}
    {\sqrt{\ell} h(D) \log \varepsilon},
  \]
  which is the desired equality.
\end{proof}

\subsection{Orthogonalizing}\label{ssec:orth}

We've now shown that the only Maass forms that appear in the spectral
decomposition are linear combinations of lifts of dihedral newforms.
Fortunately, this situation isn't as complicated as it might appear.

\begin{lemma}\label{lem:onlysd}
  If $D \equiv 1 \bmod 4$, then neither of the spaces $\Szero{4D}{\chi_{4D}}^{\new}$ nor
  $\Szero{2D}{\chi_{2D}}^{\new}$ contain any dihedral Maass forms.
\end{lemma}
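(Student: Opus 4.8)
The plan is to rule out dihedral Maass newforms on the levels $4D$ (with nebentypus $\chi_{4D}$) and $2D$ (with nebentypus $\chi_{2D}$) by a conductor/level comparison: a dihedral Maass form attached to a Hecke character $\eta$ of $K = \mathbb{Q}(\sqrt{D})$ has level equal to $\lvert d_K \rvert \cdot \Nm(\mathfrak{f}_\eta)$ and nebentypus $\chi_K \cdot \chi_\eta$, where $\chi_K = \left(\tfrac{d_K}{\cdot}\right)$ is the quadratic character cutting out $K$, $\mathfrak{f}_\eta$ is the conductor of $\eta$, and $\chi_\eta$ is the finite-order character of $(\mathbb{Z}/\Nm\mathfrak{f}_\eta)^\times$ obtained by restricting $\eta$ to rational integers (composed with the transfer/norm map). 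First I would record that since $D \equiv 1 \bmod 4$ and $D$ is squarefree, we have $d_K = D$, so $\chi_K = \chi_D = \left(\tfrac{D}{\cdot}\right)$, a primitive character of conductor $D$; note also $q = D$ and $\ell = 4$ in this case, so $\chi_{4D}$ has conductor $D$ and $\chi_{2D}$ has conductor $2D$ (as $4$ divides the level). A dihedral newform of nebentypus $\chi$ on level $N$ thus forces $\chi_D \cdot \chi_\eta = \chi$ and $D \cdot \Nm(\mathfrak f_\eta) = N$, i.e. $\Nm(\mathfrak f_\eta) = N/D \in \{4, 2\}$ for $N \in \{4D, 2D\}$.

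Next I would analyze the two cases separately. For $N = 2D$, we need a Hecke character $\eta$ of $K$ with $\Nm(\mathfrak f_\eta) = 2$. The rational prime $2$ is either inert, split, or ramified in $K$; since $D \equiv 1 \bmod 4$ and $D$ is squarefree, $2$ is unramified, so $\mathfrak f_\eta$ with norm $2$ would have to be a prime of $K$ above $2$ of residue degree $1$, which requires $2$ to split, i.e. $D \equiv 1 \bmod 8$. But then $\chi_\eta$ is a character modulo $2$, hence trivial, so the nebentypus of the associated form is exactly $\chi_D$, of conductor $D$ — not $\chi_{2D}$, whose conductor is $2D$. This contradiction disposes of $\Szero{2D}{\chi_{2D}}^{\new}$. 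For $N = 4D$, we need $\Nm(\mathfrak f_\eta) = 4$; since $2$ is unramified, the only possibility is $\mathfrak f_\eta = (2)$, the ideal generated by $2$ (either $2$ inert, giving a prime of norm $4$, or $2 = \mathfrak p\bar{\mathfrak p}$ split, giving $\mathfrak f_\eta = \mathfrak p \bar{\mathfrak p} = (2)$). In either subcase $\mathfrak f_\eta = (2)$ is Galois-stable, which forces $\eta$ to factor through the norm, i.e. $\eta = \chi \circ \Nm$ for a Dirichlet character $\chi$ mod $2$ — hence $\eta$ is trivial on the relevant ray class group modulo its genus-theoretic part, and the "dihedral" form it produces is actually an Eisenstein series (an $\eta$ fixed by $\Gal(K/\mathbb{Q})$ does not give a cuspidal automorphic induction), not a Maass cusp form. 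This contradiction disposes of $\Szero{4D}{\chi_{4D}}^{\new}$.

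The main obstacle is getting the conductor-and-nebentypus dictionary for dihedral (automorphically induced) Maass forms stated precisely enough to run the argument — in particular, the exact statement that $\mathrm{cond}(\mathrm{AI}(\eta)) = \lvert d_K\rvert \Nm(\mathfrak f_\eta)$ and $\mathrm{neb}(\mathrm{AI}(\eta)) = \chi_K \cdot (\eta|_{\mathbb{A}_\mathbb{Q}^\times})$, together with the fact that $\mathrm{AI}(\eta)$ is cuspidal if and only if $\eta \neq \eta^\sigma$ (equivalently $\eta$ does not factor through the norm). These are standard (Jacquet–Langlands, or the classical treatment via theta series / Hecke's $\mathfrak{grossencharakter}$ forms), and I would cite them rather than reprove them; once they are in hand, the argument above is just the observation that at every prime dividing the level but not $D$ — namely only $p = 2$ here — the local conductor is too large relative to the local structure of $2$ in $K$, or else $\eta$ is forced to be Galois-invariant. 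I would also need the mild input that the fundamental unit has $N(\varepsilon) = -1$ (a standing hypothesis), which guarantees $d_K = D$ rather than $4D$ and keeps the narrow and wide class groups equal, so that no stray narrow-ray-class characters of conductor dividing $4\cdot\infty_1\infty_2$ sneak in.
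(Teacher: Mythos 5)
Your overall strategy---comparing the conductor and nebentypus of the automorphic induction $\mathrm{AI}(\eta)$ against the required level and nebentypus---is in the same spirit as the paper's proof, but three of your key steps do not hold up. First, your ``contradiction'' in the level-$2D$ case rests on the claim that $\chi_{2D}$ has conductor $2D$. In the decomposition~\eqref{eq:S0new} the symbol $\chi_{aq}$ denotes the character modulo $aq$ induced by the primitive character $\chi_D$ of conductor $D$ (the Kronecker symbol $(\tfrac{2D}{\cdot})$ has conductor $8D$ and could not even serve as a nebentypus on level $2D$), so the required nebentypus really is $\chi_D$ and your computation produces no contradiction at all. The genuine obstruction, which is the one the paper uses, is that a prime $\frakp$ of norm $2$ has residue field $\F_2$ with trivial unit group, so the ray class group modulo $\frakp$ collapses onto the class group and no Hecke character has conductor exactly $\frakp$. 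Second, in the level-$4D$ case you assert that $\Nm(\mathfrak{f}_\eta)=4$ forces $\mathfrak{f}_\eta=(2)$; but when $2$ splits as $\frakp\overline{\frakp}$, the ideal $\frakp^2$ also has norm $4$, and you never rule it out. Third, and most seriously, the inference ``$\mathfrak{f}_\eta=(2)$ is Galois-stable, hence $\eta$ factors through the norm'' is false: Galois-stability of the conductor says nothing about Galois-invariance of the character (the trivial conductor is Galois-stable, yet the unramified Hecke characters that drive the rest of the paper do not factor through the norm).

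The paper closes the inert case by a different mechanism: when $2$ is inert, $(\calO_D/\frakp_2)^\times\simeq\F_4^\times$ has order $3$, and since $D\equiv 1\bmod 4$ the fundamental unit $\varepsilon$ maps to a generator of $\F_4^\times$, so the ray class group modulo $\frakp_2$ is again isomorphic to the class group and there is no character of conductor exactly $(2)$. You would need an argument of this ray-class-group type---tracking the image of the global units in the local unit groups---to replace your Galois-stability step, together with a separate argument disposing of $\mathfrak{f}_\eta=\frakp^2$ in the split case. As written, the proposal does not establish the lemma.
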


\begin{proof}
  Let $\eta$ be a Hecke character of $\Q(\sqrt{D})$, and assume by contradiction that $\mu_{\eta}$ has (minimal) level $2D$ or $4D$.
  Since $\mu_{\eta}$ has nebentypus $\chi_{D}$, it follows that $\eta \vert_{\Q^{\times}} = 1_{2}$. In particular, recalling also that $L_2(s, \mu_{\eta}) = L_2(s, \eta)$, the conductor of $\eta$ divides $\frakp_2$, a prime ideal over $2$.
  If $2$ is split or ramified in $\Q(\sqrt{D})$, then $\calO_D / \frakp_2 \simeq \F_2$, hence there is no character of conductor $\frakp_2$, showing that $\eta$ has trivial conductor, hence $\mu_{\eta}$ has minimal level $D$.

  When $2$ is inert in $\Q(\sqrt{D})$, we have $\calO_D / \frakp_2 \simeq \F_4$, and since $D \equiv 1 \bmod 4$, the image of $\varepsilon$ in $\F_4^{\times}$ is a generator, showing that the map $\calO_D^{\times} \to \{ \pm 1\}^2 \times \F_4^{\times}$ is surjective, and the narrow class group of level $2$ is isomorphic to the class group. Therefore, there are no Hecke characters of conductor $2$, hence in this case as well $\eta$ has trivial conductor, and $\mu_{\eta}$ has minimal level $D$.
\end{proof}

Thus we have two scenarios: if $D \not \equiv 1 \bmod 4$, then there are only
newforms to consider; if $D \equiv 1 \bmod 4$, then the only Maass forms that
don't vanish are lifts of dihedral forms on $S_0(D, \chi_D)$.
In particular, if $\{\mu_j\}$ is an orthonormal basis of eigenforms for
$L^2(\Gamma_0(D) \backslash \calH; \chi_D)$, then $\{ B_a \mu_j \}_{a \mid 4,j}$
is a basis (not necessarily orthonormal) for
$i_4(L^2(\Gamma_0(D) \backslash \calH; \chi_D))$.

In the spectral expansion~\eqref{eq:initial_spectral}, we have an orthonormal
basis of Maass forms that are either dihedral or lifts of dihedral forms.
We now describe these forms and their coefficients explicitly. We largely follow the notation from~\cite[\S1.7,\S1.9]{bump1998automorphic},
though we refer also to~\cite{lang1994algebraic}
and~\cite{molin2022computing}.

Each dihedral Maass form arises from a Hecke character.
Let $\eta$ be a Hecke character on $F = \Q(\sqrt{D})$, ramified only at
$\infty$. Then $\eta = \eta_0 \eta_{\infty}^m$ for some $m = m(\eta) \in \Z$,
with $\eta_0 \in \Cl^+(F)^{\vee}$ a finite order character of the narrow class
group and
\begin{equation}
  \eta_{\infty}(x) =
  \eta_{\infty}(x_1, x_2)
  =
  \mathrm{sgn}(x_1) \mathrm{sgn}(x_2)
  \Bigl \lvert \frac{x_1}{x_2} \Bigr \rvert^{\frac{i \pi}{2 \log \varepsilon}},
\end{equation}
where $x_1, x_2$ are the two embeddings of $x$ into $\mathbb{R}$ (after having
fixed the ordering of the embeddings).
For each such character $\eta$, the function
\begin{align}
  \mu_{\eta}(x+iy)
   & \colonequals
  \delta_{\eta=1} \cdot \frac{h(D) \log \varepsilon}{2}  \sqrt{y} \\
   & \qquad +
  \sum_{\frakb \subseteq \calO_D} \eta(\frakb)
  \sqrt{y} K_{\frac{m i \pi}{2 \log \varepsilon}} (2 \pi N(\frakb) y)
  \cdot
  \mathrm{cs}_m(2 \pi N(\frakb) x),
\end{align}
in which $\mathrm{cs}_m = \cos $ if $2 \mid m$ and $ \mathrm{cs}_m = \sin $ if
$ 2 \nmid m$, is a dihedral Maass form~\cite{Maass} for $\Gamma_0(4D)$ with
nebentypus $\chid$.
Note that $\mu_{\eta}(z) = \mu_{\eta^{-1}}(z)$. Moreover, $\mu_\eta$ is non-cuspidal
if and only if $\eta$ factors through the norm by~\cite[Cor.\ 6.6]{ll1979}.
If $\eta$ factors through the norm, then (using the same notation
$\chi_{r^\diamond}$ as in Remark~\ref{rem:explicit-regularization})
$\eta = \chi_{r^{\diamond}} \circ \Nm_{F/\Q}$ and direct calculation shows
$\mu_{\eta} = h(D) \log \varepsilon \sqrt{r} E_{\chi_{r^{\diamond}}, \chid
      \chi_{r^{\diamond}}}$.

We note that factoring through the norm is equivalent to $\eta = \eta^{-1}$, as
\[
  \eta^{-1}(\frakb) = \eta(\overline{\frakb})
  \iff
  1 = \eta(\frakb \overline{\frakb}),
\]
hence $\eta = \eta^{-1}$ if and only if $\eta(\frakb) =
  \eta(\overline{\frakb})$ for all nontrivial $\frakb$.
Equivalently, $\eta$ factors through the norm if and only if $\eta$ is fixed
by the natural action of $\Gal(F/ \Q)$.
Let $X_F$ be a set of representatives for $\Cl^+(F)^{\vee} / \Gal(F / \Q)$
excluding the fixed points, i.e.\ such that $\eta \ne \eta^{-1}$.
Then $\lvert X_F \rvert = \tfrac{h^+(D) - d(D)/2}{2}$.

The dihedral Maass forms $\mu_\eta$ are eigenforms for the Hecke algebra with
Hecke eigenvalues $\eta(\frakp) + \eta^{-1}(\frakp)$ at the split primes
$(p) = \frakp \overline{\frakp}$.
Further, for every $a \mid \ell$, we have that $B_a \mu_{\eta} = B_a \mu_{\eta^{-1}}$
is also a Hecke eigenform for the odd part
of the Hecke algebra of $\Gamma_0(\ell D)$.
The Hecke operators at $2$ require more work.
When $D \equiv 1 \bmod 4$ (so that $\ell = 4$), for any
$f \in L^2(\Gamma_0(D) \backslash \calH ; \chi_{D})$
we have
\[
  T_2 B_1 f = T_2 f - \chi_D(2) B_2 f, \qquad
  T_2 B_2 f = f, \qquad
  T_2 B_4 f = B_2 f.
\]

It follows that when $f$ is a Hecke eigenform with $T_2 f = \lambda(2) f$,
and $U_f$ is the subspace spanned by $f, B_2 f, B_4 f$, then $T_2 \vert_{U_f}$
is represented by the matrix
\[
  \begin{pmatrix}
    \lambda(2) & 1 & 0 \\
    -\chi_D(2) & 0 & 1 \\
    0          & 0 & 0
  \end{pmatrix}
\]
with respect to the basis $f, B_2 f, B_4 f$.

For any $p \ne 2$, $T_p$ commutes with $B_a$ for each $a \mid 4$, which implies
that $\langle B_{a_1} \mu_i, B_{a_2} \mu_j \rangle = 0$ whenever $\mu_i$ and
$\mu_j$ are distinct dihedral Maass forms on $\Gamma_0(D)$.
Thus the space of dihedral forms admits the orthogonal decomposition
\[
  i_4(L_{\textup{dihedral}}^2(\Gamma_0(D) \backslash \calH; \chi_D))
  =
  \bigoplus_{j} (\C \mu_j \oplus \C B_2 \mu_j \oplus \C B_4 \mu_j),
\]
where $\mu_j$ range over the dihedral forms coming from Hecke characters $\eta$
that do not factor through the norm.
Let $U_j = \C \mu_j \oplus \C B_2 \mu_j \oplus \C B_4 \mu_j$.
Although $T_2$ is no longer normal, it satisfies $T_2^* = W T_2 W^{-1}$, where
$W$ is the Fricke involution (cf.~\cite[\S6]{dfi}).
With respect to the basis $\{\mu_j, B_2 \mu_j, B_4 \mu_j \}$ we have
\[
  W \vert_{U_j} = \begin{pmatrix}
    0 & 0 & \frac{1}{2} \\
    0 & 1 & 0           \\
    2 & 0 & 0
  \end{pmatrix},
  \quad
  T_2 \vert_{U_j} = \begin{pmatrix}
    \lambda_j(2) & 1 & 0 \\
    -\chi_D(2)   & 0 & 1 \\
    0            & 0 & 0
  \end{pmatrix}.
\]

Therefore
\[
  T_2^* \vert_{U_j} = \begin{pmatrix}
    0 & 0 & 0                      \\
    2 & 0 & -\frac{1}{2} \chi_D(2) \\
    0 & 2 & \lambda_j(2)
  \end{pmatrix}.
\]

Let $G$ be the Gram matrix of $U_j$.
This satisfies $T_2^t G = G T_2^*$, giving rise to a linear system of equations
with solution
\[
  G = \frac{1}{4}\langle \mu_j, \mu_j \rangle \begin{pmatrix}
    4                                                  & \frac{4 \lambda_j(2)}{2 + \chi_D(2)} & \frac{2 \lambda_j^2(2)}{2 + \chi_D(2)} - \chi_D(2) \\
    \frac{4 \lambda_j(2)}{2 + \chi_D(2)}               & 2                                    & \frac{2 \lambda_j(2)}{2 + \chi_D(2)}               \\
    \frac{2 \lambda_j^2(2)}{2 + \chi_D(2)} - \chi_D(2) & \frac{2 \lambda_j(2)}{2 + \chi_D(2)} & 1
  \end{pmatrix}.
\]
From this, it is straightforward to compute that
\begin{equation}
  \begin{split}\label{eq:basis}
    \mu_{j,1} & = \mu_j, \qquad \mu_{j,2} = B_2 \mu_j - \frac{\lambda_j(2)}{2 +
    \chi_D(2)} \mu_j,                                                                            \\
    \mu_{j,3} & = B_4 \mu_j - \tfrac{1}{2} \lambda_j(2) B_2 \mu_j + \tfrac{1}{4} \chi_D(2) \mu_j
  \end{split}
\end{equation}
is an orthogonal basis with norms
\begin{equation}
  \begin{split}\label{eq:basis_norms}
    \frac{\langle  \mu_{j,1} , \mu_{j,1} \rangle}{\langle  \mu_j, \mu_j \rangle} & =
    1, \qquad \qquad
    \frac{\langle  \mu_{j,2}, \mu_{j,2} \rangle}{\langle  \mu_j, \mu_j \rangle} = \frac{1}{2} - \frac{\lambda_j(2)^2}{5+4\chi_D(2)},
    \\
    \frac{\langle  \mu_{j,3}, \mu_{j,3} \rangle}{\langle  \mu_j, \mu_j \rangle}  & = \frac{3}{16} - \frac{2 - \chi_D(2)}{8(2 + \chi_D(2))} \lambda_j(2)^2 .
  \end{split}
\end{equation}

\section{Explicitly computing remaining contributions}\label{sec:compute}

The spectral decomposition from~\eqref{eq:initial_spectral} now takes the form

\begin{equation}\label{eq:no_cont}
  \frac{4 \Gamma(s) \Zodd(2s)}{(4\pi D)^s}
  =
  \bigl\langle E(\cdot, \tfrac{1}{2}), P_\ell(\cdot, \overline{s}+\tfrac{1}{2}) \bigr\rangle
  +
  \sum_j \langle P_\ell(\cdot, \overline{s}+\tfrac{1}{2}), \mu_j \rangle
  \frac{\langle V, \mu_j \rangle}{\lVert \mu_j \rVert^2},
\end{equation}
where the continuous spectrum has vanished by \S\ref{ssec:cont} and the
discrete spectrum consists of dihedral Maass forms in $L^2(\Gamma_0(4D),
  \chid)$ when $D \equiv 2 \bmod 4$, or orthogonalized
lifts of dihedral Maass forms in $L^2(\Gamma_0(D), \chi_D)$ when $D \equiv 1
  \bmod 4$ (cf. Lemmas~\ref{lem:old_form_inner} and~\ref{lem:onlysd}, and
equation~\eqref{eq:S0new}).
We find it easier to work with an orthogonal basis and to later divide by the
norm $\lVert \mu_j \rVert^2$.

To complete our understanding of the meromorphic continuation, we compute these
inner products.

\subsection{Poincar\'e inner products}

The inner product $\langle E, P_\ell \rangle$ extracts the $\ell$th Fourier
coefficient of $E$, weighted by a ratio of gamma functions.
To see this, write the Eisenstein series $E(z, w)$ from Remark~\ref{rem:explicit-regularization} as
\begin{equation}
  E(z, w) = \rho(w; 0) y^w + \widetilde{\rho}(w; 0) y^{1-w}
  +
  \sum_{n \neq 0} \rho(w; n)
  \sqrt{y}
  K_{w - \frac{1}{2}}(2 \pi \lvert n \rvert y)
  e^{2 \pi i n x}.
\end{equation}
The Fourier coefficient $\rho(w;\ell)$ may be made explicit by writing $E(z,w)$ in terms of Eisenstein series attached to characters (cf.~\eqref{eq:eisenstein_lincomb}) and applying formulas from~\cite[Proposition 4.1]{Young}.
Performing these computations shows that $\rho(\tfrac{1}{2}; \ell)$ is given by
\begin{equation} \label{eq: constant term in inner product with Eisenstein series}
  \rho(\tfrac{1}{2};\ell)
  = \begin{cases}
    \frac{d(D/2)}{\sqrt{D} L(1,\chid)}, & \text{if $D \equiv 2 \bmod 4$,} \\
    \frac{d(D)}{\sqrt{D} L(1,\chid)},   & \text{if $D \equiv 1 \bmod 4$.}
  \end{cases}
\end{equation}

A standard unfolding argument shows that for $\Re s \gg 1$,
\begin{equation}\label{eq:E_Pell}
  \langle E(\cdot, \overline{s} + \tfrac{1}{2}), P_\ell \rangle
  =
  \rho(\tfrac{1}{2}; \ell)
  \int_0^\infty e^{-2 \pi \ell y}
  y^{s}
  K_{0}(2 \pi \ell y)
  \frac{dy}{y}
  =
  \frac{\sqrt{\pi} \rho(\tfrac{1}{2}; \ell)\Gamma(s)^2}{(4 \ell \pi)^s\Gamma(s
    + \tfrac{1}{2})},
\end{equation}
in which we've used~\cite[6.621(3)]{GradshteynRyzhik07} to evaluate the integral.
We further simplify by using the class number formula~\eqref{eq:class-number-formula}
to see
\begin{equation}\label{eq:inner_eis}
  \langle E(\cdot, \overline{s}+ \tfrac{1}{2}), P_\ell \rangle
  =
  \frac{\sqrt{\pi}}{(4 \ell \pi)^s \Gamma(s + \tfrac{1}{2}) \log \varepsilon}
  \cdot
  \frac{\Gamma(s)^2}{h(D)} \cdot \frac{d(D)}{2}.
\end{equation}
(This includes writing $d(D/2) = d(D) / \sqrt{4}$ when $D \equiv 2 \bmod 4$,
which we do to unify results).

A similar argument gives a formula for the inner product of the Poincar{\'e}
series with a Maass cusp form $\mu_j \in L^2(\Gamma_0(4D), \chid)$ of type
$\frac{1}{2} + it$ and Fourier coefficients $\rho(n)$ when $\Re s \gg 1$,
yielding
\begin{equation}\label{eq:poincare_inner}
  \langle P_\ell( \cdot, \overline{s} + \tfrac{1}{2}), \mu \rangle
  = \frac{\sqrt{\pi} \rho(\ell) \Gamma(s + it_j) \Gamma(s - it_j)}
  {(4 \ell \pi)^{s} \Gamma(s+\frac{1}{2})}.
\end{equation}

\subsection{Dihedral inner products with \texorpdfstring{$D \equiv 2 \bmod 4$}{D = 2 mod 4}}

When $D \equiv 2 \bmod 4$, we showed that $L^2(\Gamma_0(4D) \backslash
  \mathcal{H}; \chid) = L^2(\Gamma_0(4D) \backslash \mathcal{H} ;
  \chid)^{\textup{new}}$ in~\eqref{eq:S0new}.
Thus the Hecke operators diagonalize the space and the cuspidal dihedral Maass
forms coming from Hecke characters on $\mathbb{Q}(\sqrt{D})$ (described in
\S\ref{ssec:orth}) are orthogonal and are the only Maass forms with nonzero
inner products $\langle V, \mu_j \rangle$.

These inner products can be read from Lemma~\ref{lem:old_form_inner} with $a =
  b = \ell = 1$, which simplifies to
\begin{equation}\label{eq:v_mu_easy}
  \rho_j(1) \frac{\langle V, \mu_j \rangle}
  {\langle \mu_j, \mu_j \rangle}
  =
  \frac{2 \lambda_j(D)}{h(D) \log \varepsilon}.
\end{equation}

\subsection{Dihedral inner products with \texorpdfstring{$D \equiv 1 \bmod 4$}{D = 1 mod 4}}

When $D \equiv 1 \bmod 4$, for each dihedral Maass form $\mu_j$ in
$L^2(\Gamma_0(D) \backslash \mathcal{H}; \chi_D)$, there is a three-dimensional
space of forms $V_j$ in $L^2(\Gamma_0(4D) \backslash \mathcal{H}; \chid)$ with
orthogonal basis $\mu_{j, 1}, \mu_{j, 2}$, and $\mu_{j, 3}$ given
in~\eqref{eq:basis}.

We recover the inner products from Lemma~\ref{lem:old_form_inner}, which gives
the inner products for the nonorthogonal basis $\mu_j, B_2 \mu_j$, and $B_4
  \mu_j$ with $\ell = 4$, $b = 1$, and $q = D$.
This gives
\begin{align*}
  \rho_j(1)
  \frac{\langle V, \mu_j \rangle}{\langle \mu_j, \mu_j \rangle}     & = \frac{\lambda(D)}{h(D) \log \varepsilon}                                          \\
  \rho_j(1)
  \frac{\langle V, B_2 \mu_j \rangle}{\langle \mu_j, \mu_j \rangle} & = \frac{\lambda(D)}{h(D) \log \varepsilon} \cdot \frac{\lambda_j(2)}{2 + \chi_D(2)} \\
  \rho_j(1)
  \frac{\langle V, B_4 \mu_j \rangle}{\langle \mu_j, \mu_j \rangle} & = \frac{\lambda(D)}{h(D) \log \varepsilon} \cdot \frac{1}{2}.
\end{align*}
Taking the linear combinations from~\eqref{eq:basis}, we find
\begin{align*}
  \rho_j(1)
  \frac{\langle V, \mu_{j,1} \rangle}{\langle \mu_j, \mu_j \rangle}
   & =
  \frac{\lambda(D)}{h(D) \log \varepsilon},
  \qquad \qquad
  \rho_j(1)
  \frac{\langle V, \mu_{j,2} \rangle}{\langle \mu_j, \mu_j \rangle}
  =
  0
  \\
  \rho_j(1)
  \frac{\langle V, \mu_{j,3} \rangle}{\langle \mu_j, \mu_j \rangle}
   & =
  \frac{\lambda(D)}{h(D) \log \varepsilon}
  \cdot
  \left(\frac{2+\chi_D(2)}{4} - \frac{\lambda_j(2)^2}{2(2+\chi_D(2))} \right).
\end{align*}
Using the norms computed in~\eqref{eq:basis_norms} and noting that $(2 + \chi_D(2))(2
  - \chi_D(2)) = 3$, we compute
\begin{equation}
  \begin{split}\label{eq:orthog_inner_products}
    \rho_j(1)
    \frac{\langle V, \mu_{j,1} \rangle}{\langle \mu_{j,1}, \mu_{j,1}
    \rangle}                                                                  & = \frac{\lambda(D)}{h(D) \log \varepsilon}, \qquad
    \rho_j(1)
    \frac{\langle V, \mu_{j,2} \rangle}{\langle \mu_{j,2}, \mu_{j,2} \rangle} = 0                                                                         \\
    \rho_j(1)
    \frac{\langle V, \mu_{j,3} \rangle}{\langle \mu_{j,3}, \mu_{j,3} \rangle} & = \frac{\lambda(D)}{h(D) \log \varepsilon} \cdot \frac{4}{2 - \chi_D(2)},
  \end{split}
\end{equation}

\subsection{Fourier coefficients}

The inner product with the Poincar\'e series~\eqref{eq:poincare_inner}
requires the $\ell$th Fourier coefficient of each Maass form.
When $D \equiv 2 \bmod 4$, we have $\ell = 1$, so our formulas are naturally stated in terms of $\rho_j(1)$.

But when $D \equiv 1 \bmod 4$, we have $\ell = 4$ and we need the coefficients
$\rho_{j, 1}(4)$ and $\rho_{j, 3}(4)$.
(We don't need $\rho_{j, 2}(4)$ as it is multiplied by $\langle V, \mu_{j, 2}
  \rangle = 0$, which is $0$ by~\eqref{eq:orthog_inner_products}).
For these, we again work in terms of the basis $\{ \mu_j, B_2 \mu_j, B_4 \mu_j
  \}$.
Recalling that $\mu_j$ is a Hecke newform on $\Gamma_0(D)$, we use Hecke
operators to compute the $4$th coefficient in terms of the Hecke eigenvalue
$\lambda_j(2)$, giving
\begin{align}
  \begin{split}\label{eq:mu_coeffs}
    \rho_{j,1}(4) & = \rho_j(4) = \rho_j(1)(\lambda_j(2)^2 - \chi_D(2))
    \\
    \rho_{j,3}(4) & = \rho_j(1) - \tfrac{1}{2} \lambda_j(2) \rho_j(2) + \tfrac{1}{4} \chi_D(2) \rho_j(4) \\
                  & = \tfrac{1}{4} \rho_j(1) (3 - (2-\chi_D(2)) \lambda_j(2)^2).
  \end{split}
\end{align}

\subsection{Assembly}

Finally, we are ready to assemble all the computations together.
Though there are differences between the cases $D \equiv 1 \bmod 4$ and $D
  \equiv 2 \bmod 4$, we can immediately bring these cases together.

When $D \equiv 2 \bmod 4$, we combine~\eqref{eq:poincare_inner}
and~\eqref{eq:v_mu_easy} to see that
\begin{equation}\label{eq:full_discrete}
  \sum_j \langle P_\ell(\cdot, \overline{s}+\tfrac{1}{2}), \mu_j \rangle
  \frac{\langle V, \mu_j \rangle}{\lVert \mu_j \rVert^2}
  =
  \sum_j
  \frac{\sqrt{\pi} \Gamma(s + it_j) \Gamma(s - it_j)}
  {(4 \ell \pi)^{s} \Gamma(s+\frac{1}{2})}
  \frac{2 \lambda_j(D)}{h(D) \log \varepsilon}.
\end{equation}
When $D \equiv 1 \bmod 4$, we first use~\eqref{eq:orthog_inner_products}
and~\eqref{eq:mu_coeffs} to sum
\begin{align*}
  \sum_{k=1}^{3} \rho_{j,k}(4) & \frac{\langle V, \mu_{j,k} \rangle}{\langle \mu_{j,k}, \mu_{j,k} \rangle}
  \\
                               & = \frac{\lambda_j(D)}{h(D) \log \varepsilon} \cdot \left( \lambda_j(2)^2 - \chi_D(2) + \tfrac{3 - (2-\chi_D(2)) \lambda_j(2)^2}{2-\chi_D(2)} \right) \\
                               & = \frac{2 \lambda_j(D)}{h(D) \log \varepsilon}.
\end{align*}
The same computation above then shows that the total discrete contribution
exactly matches~\eqref{eq:full_discrete}.

Before summing over all $j$, we recall that the type $t_j$ of the dihedral form
$\mu_j$ is given by $t_m = \tfrac{\pi i m}{2 \log \varepsilon}$,
where $\eta = \eta_0 \eta_\infty^m$.
We sum over $m$ instead of $j$, but we must account for multiplicity.
For $m \neq 0$, there are $h^+(D)$ ($= h(D)$ as we take $N(\varepsilon) = -1$)
many characters $\eta_0$.
But when $m = 0$, only $\lvert X_F \rvert = \frac{h^+(D) - d(D)/2}{2}$ choices
are cuspidal (cf.\ \S\ref{ssec:orth}).

Note that as $D$ is squarefree, every prime $p$ dividing $D$ is ramified in
$\calO_D$ and $\sqrt{D}\calO_D$ is the unique ideal of norm $D$.
Moreover, this ideal is narrowly principal as it is generated by the totally
positive $\varepsilon \sqrt{D}$, and hence for a Hecke character $\eta$ we
compute
\begin{equation}\label{eq:eigenvalue_sqrtD}
  \eta(\sqrt{D} \calO_D)
  =
  \eta_{\infty}^m(\varepsilon \sqrt{D})
  =
  \left \lvert
  - \frac{\varepsilon \sqrt{D}}{\overline{\varepsilon} \sqrt{D}}
  \right \rvert^{\frac{i m \pi}{2 \log \varepsilon}}
  = \varepsilon^{\frac{i m \pi}{\log \varepsilon}} = e^{i m \pi} = (-1)^m.
\end{equation}
This implies that that $\lambda(D) = (-1)^m$.

We apply this to sum over $j$ by summing over $m$, obtaining
\begin{align*}
  \sum_j & \langle P_\ell(\cdot, \overline{s}+\tfrac{1}{2}), \mu_j \rangle
  \frac{\langle V, \mu_j \rangle}{\lVert \mu_j \rVert^2}
  \\
         & = \frac{2 \sqrt{\pi}}{(4 \pi \ell)^s \Gamma(s + \frac{1}{2}) \log \varepsilon}
  \left( \sum_{m = 1}^{\infty}(-1)^m \Gamma(s + it_m)\Gamma(s-it_m)
  + \frac{\lvert X_F \rvert}{h(D)} \Gamma(s)^2 \right)
  \\
         & =
  \frac{2 \sqrt{\pi}}{(4 \pi \ell)^s \Gamma(s + \frac{1}{2}) \log \varepsilon}
  \left( \sum_{m = 1}^{\infty}(-1)^m \Gamma(s + it_m)\Gamma(s-it_m)
  + \frac{1}{2}\left(1 - \frac{d(D)}{2h(D)} \right) \Gamma(s)^2 \right).
\end{align*}

By the reduced spectral expansion~\eqref{eq:no_cont}, we need only add $\langle
  E(\cdot, \tfrac{1}{2}), P_\ell \rangle$ to recover $4 \Gamma(s) \Zodd(2s)/ (4\pi D)^s$.
With the evaluation~\eqref{eq:inner_eis}, this gives
\begin{equation}\label{eq:final}
  \frac{4 \Gamma(s) \Zodd(2s)}{(4\pi D)^s}
  =
  \frac{\sqrt{\pi}}{(4 \pi \ell)^s \Gamma(s + \frac{1}{2}) \log \varepsilon}
  \left( 2\sum_{m = 1}^{\infty}(-1)^m \Gamma(s + it_m)\Gamma(s-it_m)
  + \Gamma(s)^2 \right).
\end{equation}
The gamma duplication formula
shows that
\begin{equation}
  \frac{(4D\pi)^s}{4 \Gamma(s)}
  \frac{\sqrt{\pi}}{(4 \pi \ell)^s \Gamma(s + \frac{1}{2})}
  = \frac{D^s \sqrt{\pi}}{4 \ell^s \Gamma(s) \Gamma(s+\tfrac{1}{2})}
  = \frac{\left(\frac{4D}{\ell}\right)^s}{8 \Gamma(2s)} = \frac{q^s}{8 \Gamma(2s)}.
\end{equation}
Multiplying~\eqref{eq:final} by $(4 \pi D)^s/4\Gamma(s)$, rearranging, and
taking $s$ in place of $2s$ thus gives the following theorem.

\begin{theorem}
  For $s \in \mathbb{C}$ away from the poles of the summands,
  \[
    \Zodd(s) = \frac{q^{s/2}}{8 \Gamma(s) \log \varepsilon}
    \sum_{m \in \mathbb{Z}} (-1)^m
    \Gamma\Big(\frac{s}{2} + \frac{\pi i m}{2\log \varepsilon}\Big)
    \Gamma\Big(\frac{s}{2} - \frac{\pi i m}{2\log \varepsilon}\Big),
  \]
  Thus $\Zodd(s)$ has meromorphic continuation to $s \in \mathbb{C}$, with simple poles at $s = - 2k + \frac{\pi i m}{\log \varepsilon}$
  for $m \in \mathbb{Z}$ and integral $k \geq 0$.
\end{theorem}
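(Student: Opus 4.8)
The asserted identity is a repackaging of~\eqref{eq:final} together with the Legendre-duplication computation recorded just before the statement, so the plan has two stages: first deduce the closed form, then read off the poles.

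For the formula, I would begin by folding the sum over $m\ge 1$ in~\eqref{eq:final} into a sum over all of $\mathbb{Z}$. Since the dihedral Maass form indexed by $-m$ has type $-t_m$, both the product $\Gamma(s+it_m)\Gamma(s-it_m)$ and the sign $(-1)^m$ are invariant under $m\mapsto -m$, while the $m=0$ term is $\Gamma(s)^2$; hence
\[
  2\sum_{m=1}^{\infty}(-1)^m\Gamma(s+it_m)\Gamma(s-it_m) + \Gamma(s)^2
  =
  \sum_{m\in\mathbb{Z}}(-1)^m\Gamma(s+it_m)\Gamma(s-it_m).
\]
Then I would multiply~\eqref{eq:final} through by $(4\pi D)^s/4\Gamma(s)$, substitute the identity $\frac{(4D\pi)^s}{4\Gamma(s)}\cdot\frac{\sqrt{\pi}}{(4\pi\ell)^s\Gamma(s+\frac12)} = \frac{q^s}{8\Gamma(2s)}$, and take $s$ in place of $2s$, which produces exactly the stated expression. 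To justify it globally I would note that Stirling's formula gives a bound of the shape $\bigl|\Gamma(\tfrac{s}{2}+\tfrac{\pi i m}{2\log\varepsilon})\Gamma(\tfrac{s}{2}-\tfrac{\pi i m}{2\log\varepsilon})\bigr| \ll_{s} |m|^{\Re s-1}e^{-\pi^{2}|m|/(2\log\varepsilon)}$, so the series converges locally uniformly on the complement of the discrete set $\{-2k+\tfrac{\pi i m}{\log\varepsilon}\}$ and defines a meromorphic function on $\mathbb{C}$; since~\eqref{eq:final} was established for $\Re s$ large, the identity then propagates to all of $\mathbb{C}$ by analytic continuation.

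Next I would locate the poles. The $m$-th summand is singular precisely where $\tfrac{s}{2}\pm\tfrac{\pi i m}{2\log\varepsilon}$ is a non-positive integer, i.e.\ at $s=-2k\mp\tfrac{\pi i m}{\log\varepsilon}$ with $k\ge 0$; as $m$ ranges over $\mathbb{Z}$ this is exactly $\{-2k+\tfrac{\pi i m}{\log\varepsilon}:k\ge 0,\ m\in\mathbb{Z}\}$. I would then determine the order of $\Zodd$ at such a point $s_0=-2k_0+\tfrac{\pi i m_0}{\log\varepsilon}$ in two cases. If $m_0\ne 0$, then $s_0$ is not a non-positive integer, so the prefactor $1/\Gamma(s)$ is finite and nonzero there; exactly the summands $m=m_0$ and $m=-m_0$ are singular, each with a simple pole, and since these two summands are the same function of $s$ their principal parts add, leaving a simple pole whose residue is a nonzero multiple of $\Gamma\bigl(-k_0+\tfrac{\pi i m_0}{\log\varepsilon}\bigr)$. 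If $m_0=0$, only the $m=0$ summand $\Gamma(s/2)^2$ is singular at $s_0=-2k_0$, where it has a double pole, but $1/\Gamma(s)$ contributes a simple zero there, so the net order is again one. In every case the pole is simple with nonzero residue (using that $\Gamma$ has no zeros), giving the claimed polar set.

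The step I expect to require the most care is this residue bookkeeping at the end: one has to confirm that at the off-axis poles the two surviving summands reinforce rather than cancel, and that the zero of $1/\Gamma(s)$ at the negative even integers reduces the double pole to a simple one rather than removing it. Both follow from the $m\mapsto -m$ symmetry of the summand and from the nonvanishing of $\Gamma$, so once spelled out this is routine; everything preceding it is the folding manipulation and the duplication identity indicated above.
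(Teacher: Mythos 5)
Your proposal is correct and follows essentially the same route as the paper, whose own derivation consists precisely of folding the $m\ge 1$ sum and the $\Gamma(s)^2$ term into a sum over $m\in\mathbb{Z}$, multiplying~\eqref{eq:final} by $(4\pi D)^s/4\Gamma(s)$, applying the duplication identity $\frac{(4D\pi)^s}{4\Gamma(s)}\cdot\frac{\sqrt{\pi}}{(4\pi\ell)^s\Gamma(s+\frac12)}=\frac{q^s}{8\Gamma(2s)}$, and replacing $2s$ by $s$. The Stirling estimate and the pole bookkeeping you supply (reinforcement of the $\pm m_0$ summands off the real axis, and the simple zero of $1/\Gamma(s)$ reducing the double pole of $\Gamma(s/2)^2$ at $s=-2k_0$ to a simple one) are correct and fill in details the paper leaves implicit.
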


\subsection{Extended Remark on $\Zeven$}\label{rem:Z-even}
  The even-indexed Fibonacci zeta function may be written
  \[
    \Zeven(s) = \tfrac{1}{4} \sum_{n \geq 1} \frac{r_1(n) r_1(Dn+\ell)}{n^{s/2}}.
  \]
  By analogy, one might hope to understand $\Zeven(s)$ through the inner product
  $\langle y^{\frac{1}{2}} \theta(Dz) \overline{\theta(z)}, P_{-\ell}(z,
    \overline{s}; \chid) \rangle$, where $P_{-\ell}$ is the formal generalization
  of~\eqref{eq:P-h-definition} to $h=-\ell$.

  Unfortunately, the series $P_{-\ell}(z,s;\chid)$ diverges for all $z \in
    \mathcal{H}$. Nevertheless, a method for defining $P_{-\ell}$ through controlled
  limiting processes was presented in~\cite{hoffsteinhulse13}. With this, one may
  adapt the argument in the odd-indexed case to prove a meromorphic continuation
  for $\Zeven(s)$.

  In the left half-plane $\Re s < 0$, the resulting meromorphic continuation takes the form
  \[
    \Zeven(s) = \frac{q^{\frac{s}{2}} \Gamma(1-s)}{4 \log \varepsilon}
    \sum_{m \in \mathbb{Z}} \frac{\Gamma(\frac{s}{2} - \frac{\pi i m}{2 \log \varepsilon})}{\Gamma(1-\frac{s}{2} - \frac{\pi i m}{2 \log \varepsilon})},
  \]
  which exactly matches a continuation established for $\Zeven$ in~\cite{akldwFibonacciGeneral} via Poisson summation. To see this, one combines equations~\cite[(3.29)]{hoffsteinhulse13} and~\cite[(4.17)]{hoffsteinhulse13}, simplifying further using the gamma function identities
  \begin{align*}
    \frac{\Gamma(\frac{1}{2}- \frac{s}{2})}{\Gamma(\frac{s}{2})}
     & = \frac{2^s \Gamma(1-s)}{\sqrt{\pi} \csc(\frac{\pi s}{2})},       \\
    \frac{2 \Gamma(\frac{s}{2} - it_j)\Gamma(\frac{s}{2} + it_j)}{\Gamma(\frac{1}{2} - it_j) \Gamma(\frac{1}{2} + it_j) \csc(\frac{\pi s}{2})}
     & = \frac{\Gamma(\frac{s}{2} - it_j)}{\Gamma(1-\frac{s}{2} - it_j)}
    + \frac{\Gamma(\frac{s}{2} + it_j)}{\Gamma(1-\frac{s}{2} + it_j)}.
  \end{align*}
  (As in~\cite[\S{4.2}]{akldwFibonacciGeneral}, behavior on the line $\Re s = 0$ is more complicated.)

\bibliographystyle{alpha}
\bibliography{bibfile}

\end{document}